\def\GrasSpace(#1,#2){{\bf G}_{{#2}}({#1})}
\def\LineOn(#1,#2){\overline{{#1},{#2}\rule{0em}{1,5ex}}}
\def\inc{\strut\rule{3pt}{0pt}\rule{1pt}{9pt}\rule{3pt}{0pt}\strut}
\def\chains{{\cal C}}
\def\predces{\prec}
\def\succes{\succ}
\def\mid{\mathrel{{-\mkern-15mu:}}}
\def\konftyp(#1,#2,#3,#4){\left( {#1}_{#2}\, {#3}_{#4} \right)}
\def\starof{{\mathrm{S}}}
\def\topof{{\mathrm{T}}}
\def\stars{{\cal S}}
\def\tops{{\cal T}}
\def\HervSpSymb{{\mathscr C}{\chi}}
\def\HervSpace(#1){(\HervSpSymb)_{#1}}
\def\myend{{}\hfill{\small$\bigcirc$}}
\title[The Cox and related configurations]{%
The Cox, Clifford, M{\"o}bius, Miquel, and other related configurations 
and their generalizations}
\author{M. Pra{\.z}mowska, K. Pra{\.z}mowski}
\begin{document}

\newtheorem{reprx}[thm]{Representation}
\newenvironment{repr}{\begin{reprx}\normalfont}{\myend\end{reprx}}
\newtheorem{cnstrx}[thm]{Construction}
\newenvironment{constr}{\begin{cnstrx}\normalfont}{\myend\end{cnstrx}}

\newenvironment{ctext}{%
  \par
  \smallskip
  \centering
}{%
 \par
 \smallskip
 \csname @endpetrue\endcsname
}

\maketitle

\begin{abstract}
  The realizability of countable Cox configurations on Miquelian planes is proved.
  A simple way to determine the automorphisms of the Cox configurations is presented.
\end{abstract}

\begin{flushleft}\small
  Mathematics Subject Classification (2010): 05B30, 51B10.\\
  Keywords: Cox configuration(s), Clifford configuration, M\"obius configuration,
  Miquel axiom, cube graph, Levi graph, generalized Desargues configuration.
\end{flushleft}


\section*{Introduction}

The smallest reasonable Cox configuration was invented many eyars ago,
and  it appears under various names, depending on the geometry in which 
it was discovered:
M{\"o}bius configuration of points and planes, Steiner-Miquel configuration
of points, lines, and circles, a suitable biplane; it is also a completion
of the Miquel configuration of points and circles/chains.
In each case a suitable configuration plays its   
role in geometry.
And it is truly elegant.
In the first Section of the paper we briefly explain these matters.
Let us mention that even nowadays this configuration is a source of interesting researches
which have applications in physics.
\par
Let us consider this smallest Cox configuration as an abstract, purely combinatorial
object.
It can be, clearly, generalized in various ways depending on the way in which
the source configuration is presented. In this note we follow one of these
ways, where Cox configurations, defined in Section \ref{sec:Cox:def}
in terms of elementary combinatorics of finite sets, are associated
with the hypercube graphs. In what follows we prove the realizability
of Cox configurations on spheres 
(cf. Thm. \ref{thm:reprezent}; more generally: in Miquelian chain spaces,
in particular: on quadrics); we also discuss relationships of our configurations
with generalized Desargues configurations.
\par
Finally, our approach enables us to give an easy way to determine the automorphism
group of a Cox configuration (cf. Cor. \ref{cor:Cox:aut}).


\section{Basic: the Steiner-Miquel configuration}\label{sec:history}

Let us start with a verbal presentation of the classical 
Steiner-Miquel configuration (cf. \cite{ehrmann})\footnote{%
this is {\em our} name, this configuration appears in the literature
under quite different names}:
Consider a Veblen configuration on the Euclidean plane i.e. four lines such that
each two distinct of them have a  common point. 
Then four triangles can be formed
from these lines (taken as the sides of the triangles in question), and four circles
circumscribed on these triangles appear. The circles thus obtained all pass through
one point (the so called {\em Miquel point}). It is one of the classical theorems of
so called {\em Advanced Euclidean Geometry}. 

The resulting configuration is much better seen when we retold the whole story on the
so called inversive plane: the one-point completion of the Euclidean plane
i.e. on a sphere. 
Then the lines of the considered
Veblen configuration are four circles through a fixed point, and the whole configuration
appears fully symmetric. Let us present the Steiner-Miquel configuration in this way.

We have eight circles $A_1,A_2,A_3,A_4, B_1,B_2,B_3,B_4$ and the points
$q_A,q_B,q_{\{i,j\}}$ with $\{i,j\}\in\sub_2(\{ 1,2,3,4 \})$ that satisfy the 
incidences given in Table \ref{tab:hervey}.
Clearly, the matrix defines a 
$\konftyp(8,4,8,4)$-configuration 
${\goth H} = \struct{S,\chains}$
in which 
\begin{enumerate}[(I)]\itemsep-2pt
\item\label{circles:ax1}
  through any pair of points there pass either 0 or 2 blocks, 
\item\label{circles:ax2}
  no three points are in two blocks, 
\item\label{circles:ax3}
  each circle has at least three points,
\item\label{circles:ax4}
a triple of points pairwise on a block is on a block, and
\item\label{circles:ax5}
  a triple of pairwise intersecting blocks have a common point.
\end{enumerate}
(cf. axioms for biplanes and semibiplanes, e.g. in \cite{obrazki}).
The Steiner-Miquel configuration is visualized in Figure \ref{fig:hervey}.
It is self dual (cf. \ref{cor:cx:homog}).

\begin{table}[h!]
\def\OK{\times}
\def\NO{\bf -}
\begin{center}
\begin{tabular}{c||c|c|c|c|c|c|c|c}
$\inc$ & $q_A$ & $q_{\{1,2\}}$ & $q_{\{1,3\}}$ & $q_{\{1,4\}}$ & $q_{\{2,3\}}$ & $q_{\{2,4\}}$ & $q_{\{3,4\}}$  & $q_B$
\\ \hline\hline
$A_1$  & $\OK$ & $\OK$ & $\OK$ & $\OK$ & $\NO$ & $\NO$ & $\NO$  & $\NO$
\\ \hline
$A_2$  & $\OK$ & $\OK$ & $\NO$ & $\NO$ & $\OK$ & $\OK$ & $\NO$  & $\NO$
\\ \hline
$A_3$  & $\OK$ & $\NO$ & $\OK$ & $\NO$ & $\OK$ & $\NO$ & $\OK$  & $\NO$
\\ \hline
$A_4$  & $\OK$ & $\NO$ & $\NO$ & $\OK$ & $\NO$ & $\OK$ & $\OK$  & $\NO$
\\ \hline
$B_1$  & $\NO$ & $\NO$ & $\NO$ & $\NO$ & $\OK$ & $\OK$ & $\OK$  & $\OK$
\\ \hline
$B_2$  & $\NO$ & $\NO$ & $\OK$ & $\OK$ & $\NO$ & $\NO$ & $\OK$  & $\OK$
\\ \hline
$B_3$  & $\NO$ & $\OK$ & $\NO$ & $\OK$ & $\NO$ & $\OK$ & $\NO$  & $\OK$
\\ \hline
$B_4$  & $\NO$ & $\OK$ & $\OK$ & $\NO$ & $\OK$ & $\NO$ & $\NO$  & $\OK$
\end{tabular}
\end{center}
\caption{The incidence matrix of the Steiner-Miquel configuration}
\label{tab:hervey}
\end{table}

\begin{figure}[th]
\begin{center}
  \includegraphics{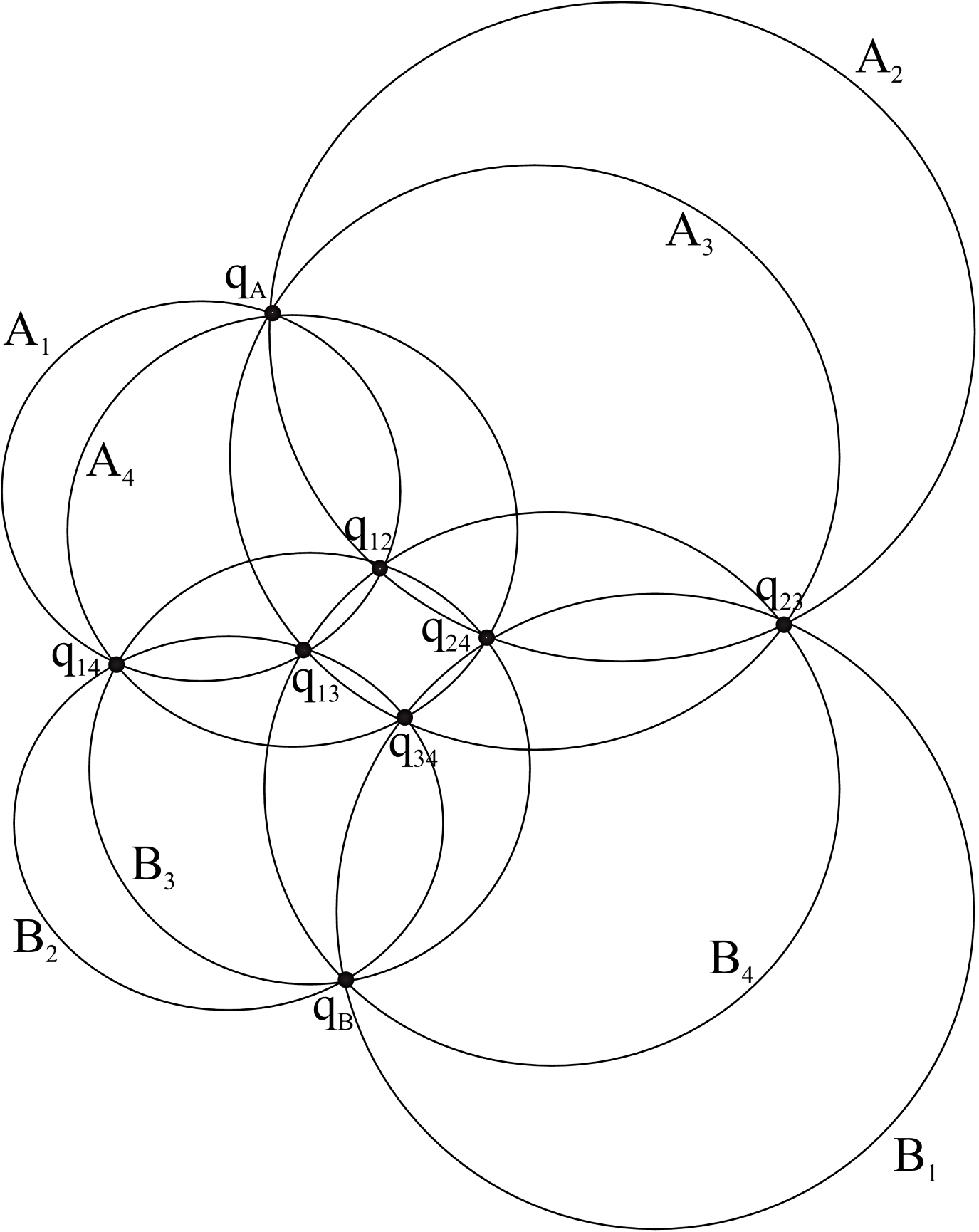}  
\end{center}
\caption{The (classical) Steiner-Miquel configuration}
\label{fig:hervey}
\end{figure}

\begin{fact}
  Let $p\in S$ be a point of $\goth H$. Consider the substructure ${\goth K}_{(p)}$
  of $\goth H$,
  whose blocks are the blocks that pass through $p$ and whose points are the points of 
  rank at least two on thus defined blocks with $p$ excluded.
  Then ${\goth H}_{(p)}$ is the Veblen configuration.
\end{fact}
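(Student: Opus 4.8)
The plan is to work directly from the incidence conditions (\ref{circles:ax1})--(\ref{circles:ax2}) rather than from the picture, and to reduce the whole statement to two short counting steps that are uniform in the chosen point. Fix a point $p$ of $\goth H$. Since $\goth H$ is a $\konftyp(8,4,8,4)$-configuration, $p$ lies on exactly four blocks $c_1,c_2,c_3,c_4$, and each $c_i$ carries exactly four points, hence three points besides $p$. The substructure ${\goth H}_{(p)}$ consists of these four blocks together with the points of rank $\geq 2$ on them, $p$ deleted; I want to show it is the Veblen configuration, i.e. four lines any two of which meet in exactly one of six points, each point lying on exactly two lines.

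First I would determine, for an arbitrary point $q\neq p$, its rank inside ${\goth H}_{(p)}$, i.e. the number of the $c_i$ through it. If $q$ lies on at least one $c_i$, then the pair $\{p,q\}$ lies on a block; by axiom~\ref{circles:ax1} every pair lies on $0$ or $2$ blocks, so $\{p,q\}$ lies on exactly two blocks, and both of them pass through $p$ and are therefore among the $c_i$. Hence every non-$p$ point incident with the pencil through $p$ has rank exactly $2$: there are no points of rank $1$ or $3$. Counting the $4\cdot 3=12$ incidences between the $c_i$ and the points $\neq p$ and dividing by $2$ gives exactly six points of rank $2$, and these are precisely the points of ${\goth H}_{(p)}$.

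Next I would match these six points with the six pairs $\{c_i,c_j\}$. Each rank-$2$ point determines the unordered pair of blocks through it, giving a map from the six points to the six $2$-element subsets of $\{c_1,c_2,c_3,c_4\}$. This map is injective: if two distinct points lay on the same pair $c_i,c_j$, then together with $p$ they would be three points on the two blocks $c_i$ and $c_j$, contradicting axiom~\ref{circles:ax2}. An injection between two six-element sets is a bijection, so each pair $c_i,c_j$ meets in exactly one point other than $p$, and these six intersection points are distinct. Consequently each $c_i$ carries exactly the three points coming from its three pairs, every point lies on exactly two lines, and the four lines pairwise intersect; this is exactly the Veblen configuration.

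I do not expect a serious obstacle: the argument rests entirely on turning axiom~\ref{circles:ax1} into the statement ``every incident point has rank $2$'' and axiom~\ref{circles:ax2} into the injectivity of the pair-labelling. The only point that needs care is that the reasoning must not depend on the particular names in Table~\ref{tab:hervey}; since the counting above is uniform in $p$, it settles all eight points at once, and there is no need to invoke the homogeneity of $\goth H$ (cf. \ref{cor:cx:homog}), although that would also furnish a one-line reduction to the single case $p=q_A$, which one can verify by direct inspection of the table.
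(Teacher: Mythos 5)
Your proof is correct, and it takes a different route from the paper: the paper states this Fact without any argument, leaving it to direct inspection of the incidence matrix in Table~\ref{tab:hervey} (for a single point, with the homogeneity of \ref{cor:cx:homog} implicitly handling the remaining cases). Your argument instead is purely axiomatic: you use only the tactical parameters of a $\konftyp(8,4,8,4)$-configuration together with conditions \eqref{circles:ax1} and \eqref{circles:ax2}, turning \eqref{circles:ax1} into ``every non-$p$ point on the pencil through $p$ has rank exactly $2$'' and \eqref{circles:ax2} into injectivity of the point-to-pair labelling, and then closing with the counting $4\cdot 3/2 = 6$ and the pigeonhole bijection onto $\sub_2(\{c_1,\ldots,c_4\})$. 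All steps check out, including the final verification that this yields exactly the Veblen $\konftyp(6,2,4,3)$-configuration (six distinct intersection points, no three of the four blocks concurrent). What this buys over the paper's implicit table-checking: the argument is uniform in $p$, so you correctly avoid appealing to \ref{cor:cx:homog} --- which is prudent, since that corollary is only established in Section~\ref{sec:cx:automorfy} --- and it actually proves a more general statement, namely that the residue at any point is a Veblen configuration in \emph{every} incidence structure with point rank $4$ and block size $4$ satisfying \eqref{circles:ax1} and \eqref{circles:ax2}, not just in the specific structure $\goth H$ defined by Table~\ref{tab:hervey}. The table-inspection route is shorter but case-bound; yours is longer but transportable (e.g.\ it adapts verbatim to the analogous fact about generalized Veblen configurations in $\HervSpace(X)$ stated in Section~\ref{sec:reprezenty}).
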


\begin{fact}
  For each block $D$ of $\goth H$ there is a unique block $D^{\shortmid}$
  such that $D$ and $D^{\shortmid}$ have no point in common.
\end{fact}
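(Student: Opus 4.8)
The plan is to establish both existence and uniqueness of $D^{\shortmid}$ by a single incidence count, invoking only that $\goth H$ is a $\konftyp(8,4,8,4)$-configuration (hence every block carries exactly four points and every point lies on exactly four blocks) together with axioms~(I) and~(II). I fix a block $D$ and let $p_1,p_2,p_3,p_4$ be its four points. For each other block $E$ I write $m(E)$ for the number of points of $D$ lying on $E$; the whole claim reduces to showing that exactly one of the seven blocks $E\neq D$ has $m(E)=0$.

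First I would count the blocks meeting $D$ in at least two points. There are $\binom{4}{2}=6$ pairs of points of $D$, and each such pair already lies on $D$, so by axiom~(I) it lies on exactly one further block. By axiom~(II) no block other than $D$ contains three of the $p_i$, so a block carrying two points of $D$ realizes exactly one of these six pairs, and distinct pairs are realized by distinct blocks. Consequently there are precisely six blocks $E\neq D$ with $m(E)=2$, and none with $m(E)\geq 3$.

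The decisive step is to exclude blocks meeting $D$ in exactly one point. Here I would count point–block incidences along $D$: summing over $p_1,\dots,p_4$ the number of blocks through each point gives $4\cdot 4=16$, and this number equals $\sum_E m(E)$ taken over all blocks $E$ (including $D$). Since $D$ itself contributes $m(D)=4$ and the six blocks found above contribute $6\cdot 2=12$, we already reach $4+12=16$; hence every remaining block has $m(E)=0$. As the total number of blocks is eight, exactly one block is left unaccounted for, and it must be disjoint from $D$. This simultaneously yields existence and uniqueness of $D^{\shortmid}$.

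The only real obstacle is the possibility of a block tangent to $D$, i.e. one with $m(E)=1$; the pair-counting alone does not forbid it, which is why the incidence balance $\sum_E m(E)=16$ is essential, as it leaves no room for a one-point intersection. As a concrete check (and an alternative, table-based proof) one can read off from Table~\ref{tab:hervey} that the opposite of $A_i$ is $B_i$ and vice versa: $A_i$ carries exactly the pair-points $q_{\{i,j\}}$ together with $q_A$, while $B_i$ carries $q_B$ together with the pair-points $q_{\{j,k\}}$ with $i\notin\{j,k\}$, so the two index sets are complementary and $A_i\cap B_i=\emptyset$. This exhibits $D^{\shortmid}$ explicitly and confirms that $D\mapsto D^{\shortmid}$ is the involution swapping the $A$- and $B$-blocks of equal index.
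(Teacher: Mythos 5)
Your counting argument is correct, and it is a genuinely different route from the paper, which states this fact without proof, leaving it to be read off from the incidence matrix in Table~\ref{tab:hervey} --- exactly the verification you supply as your ``concrete check'' ($A_i$ carries $q_A$ and the pair-points $q_{\{i,j\}}$, $B_i$ carries $q_B$ and the complementary pair-points, so $D\mapsto D^{\shortmid}$ is the involution $A_i\leftrightarrow B_i$). Your main argument instead derives the fact synthetically: axiom \eqref{circles:ax1} gives exactly one second block through each of the $\binom{4}{2}=6$ pairs on $D$, axiom \eqref{circles:ax2} forces these six blocks to be distinct and to meet $D$ in exactly two points, and the incidence balance $\sum_E m(E)=4\cdot 4=16=4+6\cdot 2$ leaves the single remaining block with $m(E)=0$ --- correctly identifying the tangency case $m(E)=1$ as the point where naive pair-counting alone would be insufficient. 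What this buys is generality: your proof uses only the $\konftyp(8,4,8,4)$ parameters together with \eqref{circles:ax1} and \eqref{circles:ax2}, so the existence and uniqueness of the disjoint partner block holds in \emph{any} configuration with these data, not merely in the concrete ${\goth H}$ of Table~\ref{tab:hervey}; the table inspection, by contrast, is shorter but tied to the particular labelling. Both are valid; yours is the more portable statement.
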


\begin{fact}\label{fct:mikinmob}
  Let $D\in\chains$ be arbitrary. The substructure 
  $\struct{{S},{\chains \setminus \{ D,D^{\shortmid} \}}}$ of $\goth H$
  is the Miquel configuration.
\end{fact}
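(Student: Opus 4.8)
The plan is to read the parallel pairs directly off Table~\ref{tab:hervey} and then to exhibit the deleted structure as a cube. The disjoint block promised by the preceding Fact is, in each case, $A_i^{\shortmid}=B_i$: the column pattern shows that $A_i$ carries $q_A$ together with the three points $q_{\{i,j\}}$ whose index set contains $i$, while $B_i$ carries $q_B$ together with the remaining three points $q_{\{k,l\}}$ with $i\notin\{k,l\}$, so the two quadruples are complementary in $S$ and share no point. Moreover the index permutations $\sigma\in\mathrm{Sym}(\{1,2,3,4\})$ (acting by $A_k\mapsto A_{\sigma(k)}$, $B_k\mapsto B_{\sigma(k)}$, $q_{\{i,j\}}\mapsto q_{\{\sigma(i),\sigma(j)\}}$, with $q_A,q_B$ fixed) and the complementing involution ($q_A\leftrightarrow q_B$, $q_{\{i,j\}}\leftrightarrow q_{\{1,2,3,4\}\setminus\{i,j\}}$, $A_k\leftrightarrow B_k$) are immediately checked from the table to be automorphisms of $\goth H$, and together they act transitively on the eight blocks. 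Since any automorphism $\varphi$ carries $D^{\shortmid}$ to $\varphi(D)^{\shortmid}$ by the uniqueness in the preceding Fact, all the substructures $\struct{S,\chains\setminus\{D,D^{\shortmid}\}}$ are mutually isomorphic, and it is enough to treat $D=A_1$, $D^{\shortmid}=B_1$.

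Next I would record that $D$ and $D^{\shortmid}$ partition $S$: being disjoint and each of size four, they satisfy $D\sqcup D^{\shortmid}=S$. Hence deleting $A_1$ and $B_1$ strips every point of exactly one of its four incidences, so in $\struct{S,\chains\setminus\{A_1,B_1\}}$ each of the eight points lies on exactly three of the six remaining blocks, while every remaining block keeps all four of its points. This already shows the substructure to be a $\konftyp(8,3,6,4)$-configuration.

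The main step is to identify this $\konftyp(8,3,6,4)$-configuration with the Miquel configuration, i.e. with the vertex--face incidence structure of the $3$-cube. The six surviving blocks split into the three disjoint pairs $\{A_2,B_2\}$, $\{A_3,B_3\}$, $\{A_4,B_4\}$ of opposite faces, and by the partition just noted each point lies on exactly one block of each pair. Recording, for $i=2,3,4$, whether a given point lies on $A_i$ (value $c_i=0$) or on $B_i$ (value $c_i=1$) assigns to it a word $(c_2,c_3,c_4)\in\{0,1\}^3$; I would then check from the table that this assignment is a bijection onto $\{0,1\}^3$ (for instance $q_A\mapsto 000$, $q_B\mapsto 111$, $q_{\{1,2\}}\mapsto 011$, $q_{\{3,4\}}\mapsto 100$), and that under it each block $A_i$ is the slice $\{c_i=0\}$ and each $B_i$ the opposite slice $\{c_i=1\}$. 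This is precisely the incidence of vertices and faces of the cube, so $\struct{S,\chains\setminus\{A_1,B_1\}}$ is the Miquel configuration, and the symmetry reduction of the first paragraph transfers the conclusion to an arbitrary $D$.

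The only genuinely non-routine point is the cube identification of the third paragraph; all the rest is bookkeeping on the columns of the table. I expect the care to lie in verifying that the three coordinate functions are jointly independent, so that the eight points really receive all eight distinct words of $\{0,1\}^3$ rather than merely agreeing pair by pair; once the bijection is established, the statement that each block is a coordinate slice is read off immediately from Table~\ref{tab:hervey}.
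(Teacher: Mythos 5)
Your proposal is correct and follows the route the paper intends: the paper states this Fact without any written proof, treating it as something to be read off Table~\ref{tab:hervey}, given its earlier identification of the Miquel configuration with the vertices and circumscribed circles of the faces of a cube (Figure~\ref{fig:MiqAx}). Your verification that $A_i^{\shortmid}=B_i$, that the residual structure is a $\konftyp(8,3,6,4)$-configuration, and the coordinatization of the eight points by $\{0,1\}^3$ via the three pairs of opposite blocks is precisely the bookkeeping the paper leaves implicit, so no gap remains.
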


The Miquel Axiom which states, loosely speaking, that each Miquel configuration
closes is considered as a one of fundamental axioms in chain geometry.
It is frequently visualized by the schema in Figure \ref{fig:MiqAx}
\footnote{%
For some modern generalizations to higher dimensions see e.g. \cite{miquel:dim:n}}%
.
The ``thesis'' of this axiom can be rephrased also in the following form,
which is more suitable in ``weak'' chain structures:
{\em if $b_1,b_2,b_3$ are on a chain $B$, then $b_4 \in B$}.

\begin{figure}[th]
\begin{center}
\begin{tabular}{lr}
\begin{minipage}[m]{0.3\textwidth}
\begin{center}
\xymatrix{%
\strut   
& 
{a_1}
 \ar@{-}[rr]
 \ar@{-}[dd]
 \ar@{-}[dl]
&
\strut
&
{a_2}
  \ar@{-}[dd]
  \ar@{-}[dl]
\\
{b_1}
  \ar@{-}[rr]
  \ar@{-}[dd]
&
\strut 
&
{b_2}
  \ar@{-}[dd]
&
\strut
\\
\strut
&
{a_4}
  \ar@{-}[rr]
  \ar@{-}[dl]
&
\strut
&
{a_3}
  \ar@{-}[dl]
\\
{b_4}
  \ar@{-}[rr]
&
\strut
&
{b_3}
&
\strut
} 
\end{center}
\end{minipage}
&
\begin{minipage}[m]{0.5\textwidth}\it
  if each of the four $4$-sets $a_i,b_i,a_{i+1},b_{i+1}$, $i$ taken mod $4$,
  is on a chain and $a_1,a_2,a_3,a_4$ are on a chain
  (plus: all the points and chains involved are distinct), then 
  $b_1,b_2,b_3,b_4$ are on a chain.
\end{minipage}
\end{tabular}
\end{center}
\caption{The Miquel Axiom and its schema}
\label{fig:MiqAx}
\end{figure}

An incidence structure $\struct{S,\chains}$ 
is called {\em a weak chain structure} when it
satisfies \eqref{circles:ax2}, \eqref{circles:ax3}, and \eqref{circles:ax4}
Then the following is evident
\begin{fact}
  If a weak chain structure 
  satisfies the Miquel Axiom
  then each Steiner-Miquel Configuration contained in it closes.
\end{fact}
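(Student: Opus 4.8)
The plan is to view the Steiner--Miquel configuration through the cube that governs it: its eight points are the vertices of the $3$-cube, and its eight blocks fall into the four opposite pairs recorded by the Fact preceding~\ref{fct:mikinmob}. By that Fact, deleting \emph{any} one of these pairs $\{D,D^{\shortmid}\}$ leaves a Miquel configuration, so every block of ${\goth H}$ sits inside several such Miquel subconfigurations. ``Closing'' the pattern means showing that, once its hypothesis incidences are present in the weak chain structure, each of the eight required blocks is in fact a chain; I would establish this one block at a time.

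Fix a block $B$ to be closed. Since $B$ lies in exactly one opposite pair, I would choose a \emph{different} pair $\{D,D^{\shortmid}\}$ and delete it; by Fact~\ref{fct:mikinmob} the six surviving blocks form a Miquel configuration, and $B$ is among them. In the cube picture these six blocks are the six faces of a cube, and because the Miquel Axiom is insensitive to which face is singled out as its conclusion (the cube is transitive on its faces), I may relabel so that $B$ plays the role of the concluding chain through $b_1,b_2,b_3,b_4$, while the five remaining faces supply the hypothesis chains $\{a_i,b_i,a_{i+1},b_{i+1}\}$ and $a_1,a_2,a_3,a_4$. These five are already present, so the Miquel Axiom---in the rephrased form ``if $b_1,b_2,b_3$ lie on a chain then $b_4$ does too,'' with \eqref{circles:ax3} guaranteeing the concluding chain already meets three of the four points---forces $B$ to be a chain. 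Running over all eight choices of $B$ closes the whole configuration, and \eqref{circles:ax2} rules out any spurious extra incidence.

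The step I expect to carry the real weight is purely combinatorial bookkeeping against Table~\ref{tab:hervey}: one must check that for each of the eight blocks there is an opposite pair avoiding it (immediate, since there are four pairs), and, more delicately, that after deletion the six surviving blocks can be labelled to match the hypothesis pattern of Figure~\ref{fig:MiqAx} with \emph{all points and chains distinct}, so that the axiom applies legitimately. Two of the eight blocks are the inscribed tetrahedra rather than faces of the original cube; for these the relevant Miquel subconfiguration is the one obtained by deleting a pair of \emph{opposite faces}, inside which the two tetrahedra reappear as genuine faces and one of them may again be taken as the concluding chain. Once these matchings are fixed, each closure is a single invocation of the Miquel Axiom, which is why the statement is flagged as evident.
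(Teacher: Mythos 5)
Your proposal is correct and follows exactly the route the paper intends: the statement is declared evident on the strength of Fact~\ref{fct:mikinmob} (deleting any opposite pair $\{D,D^{\shortmid}\}$ of blocks leaves a Miquel configuration), so the missing incidence of a Steiner--Miquel configuration is recovered by one application of the Miquel Axiom inside a suitably chosen and relabelled Miquel subconfiguration --- which is precisely your argument, with the cube/face bookkeeping and the tetrahedra case made explicit. One cosmetic slip: what guarantees that the concluding chain already passes through three of the four points $b_i$ is not \eqref{circles:ax3} but the given incidences of the partial Steiner--Miquel configuration itself; this does not affect the proof.
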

Recall also that 
so called Miquelian M{\"o}bius spaces i.e. incidence structures  of points and 
conics on a (projective) sphere (i.e. on a nonruled quadric) in a Pappian
projective space are the classical examples of chain structures 
which satisfy the Miquel axiom.
\par
In view of \ref{fct:mikinmob}, the configuration $\goth H$ contains the dual of 
Miquel Configuration. The so called {\em novel generalization of Clifford's
classical point-circle configuration} (cf. \cite[Def. 3.1]{schief}, \cite{konop} is
exactly a realization of the dual of the Miquel Configuration on the Euclidean
sphere (i.e. in the real M{\"o}bius plane).
At a first look it is intriguing that `novel generalization...' can be defined
as the family of circles circumscribed on the faces of an icosahedron.
It becomes less intriguing when we remind that an icosahedron is considered
as a dual of a cube, and (see Figure \ref{fig:MiqAx}) the Miquel Configuration 
consists of the cicles circumscribed on the faces of a cube.

\medskip
A substructure $\struct{S_0,\chains_0}$ of an incidence structure 
$\struct{S,\chains}$ is called {\em a $(l_1,l_2)$-closed substructure} if 
it meets the following conditions:
\begin{itemize}\itemsep-2pt\def\labelitemi{--\quad}
\item
  $C\in\chains$, $|C \cap S_0|\geq l_1$ imply $C\in\chains_0$, and
\item
  $|\{ C\in\chains_0\colon a \in C \}|\geq l_2$ implies $a\in S_0$.
\end{itemize}
A $(3,1)$-closed substructure of a weak chain structure $\goth M$ is frequently referred
to as a {\em subspace} of $\goth M$.

Finally, the configuration $\goth H$ is also (the combinatorial scheme of) 
the M{\"o}bius $8_4$-configuration (cf. \cite{coxet})
consisting of two tetrahedrons simultaneously inscribed and described 
one in/on the other.


\section{The Cox Configuration: definitions}\label{sec:Cox:def}

Let $X$ be a set, $|X|\geq 3$. 
We write $\sub_k(X)$ for the set of $k$-element subsets of $X$, for any integer $k$.
The Cox configuration (comp. \cite{coxet}) is the incidence structure
\begin{equation}\label{eq:cx:def}
  \HervSpace(X) := 
  \bstruct{ {\bigcup \{\sub_{2k}(X)\colon 0\leq k \leq |X| \} },%
  {\bigcup\{\sub_{2k+1}(X)\colon 0\leq k\leq |X| \}},{\predces \cup \succes}},
\end{equation}
where 
$\predces$ is the direct-successor-relation and $\succes \;=\; \predces^{-1}$.
We write $\HervSpace(n) = \HervSpace(X)$ when $n = |X|$.
Clearly, if $n$ is finite then
the Cox configuration $\HervSpace(n)$ is a 
$\konftyp(2^{n-1},n,2^{n-1},n)$-configuration,
and it is a weak chain structure that
meets condition \eqref{circles:ax1}  
for arbitrary at least 3-element set~$X$.
\par
We see that $\HervSpace(4) \cong {\goth H}$ (have a look at Figure \ref{fig:herv4}). 
The structure $\HervSpace(3)$ is visualized in Figure \ref{fig:herv3}. 
The structure $\HervSpace(5)$, much more complicated, being a suitable completion of
the Desargues configurations, is presented in Figure \ref{fig:herv5}.

\begin{figure}[t]
\begin{center}
  \includegraphics{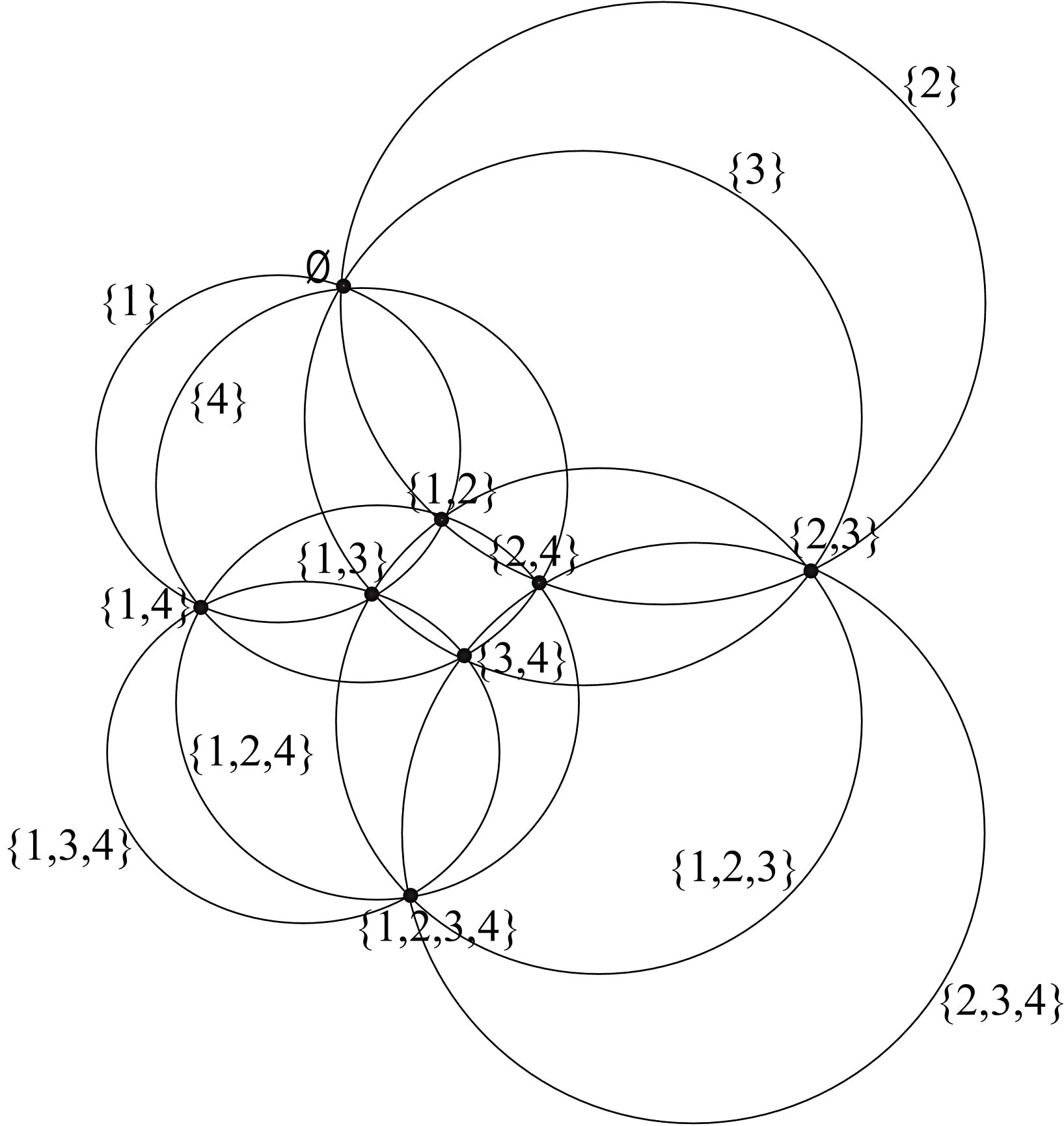}  
\end{center}
\caption{The (classical) Steiner-Miquel configuration 
  labelled by the subsets of $\{ 1,2,3,4 \}$}
\label{fig:herv4}
\end{figure}

\begin{figure}[t]
\begin{center}
 \includegraphics{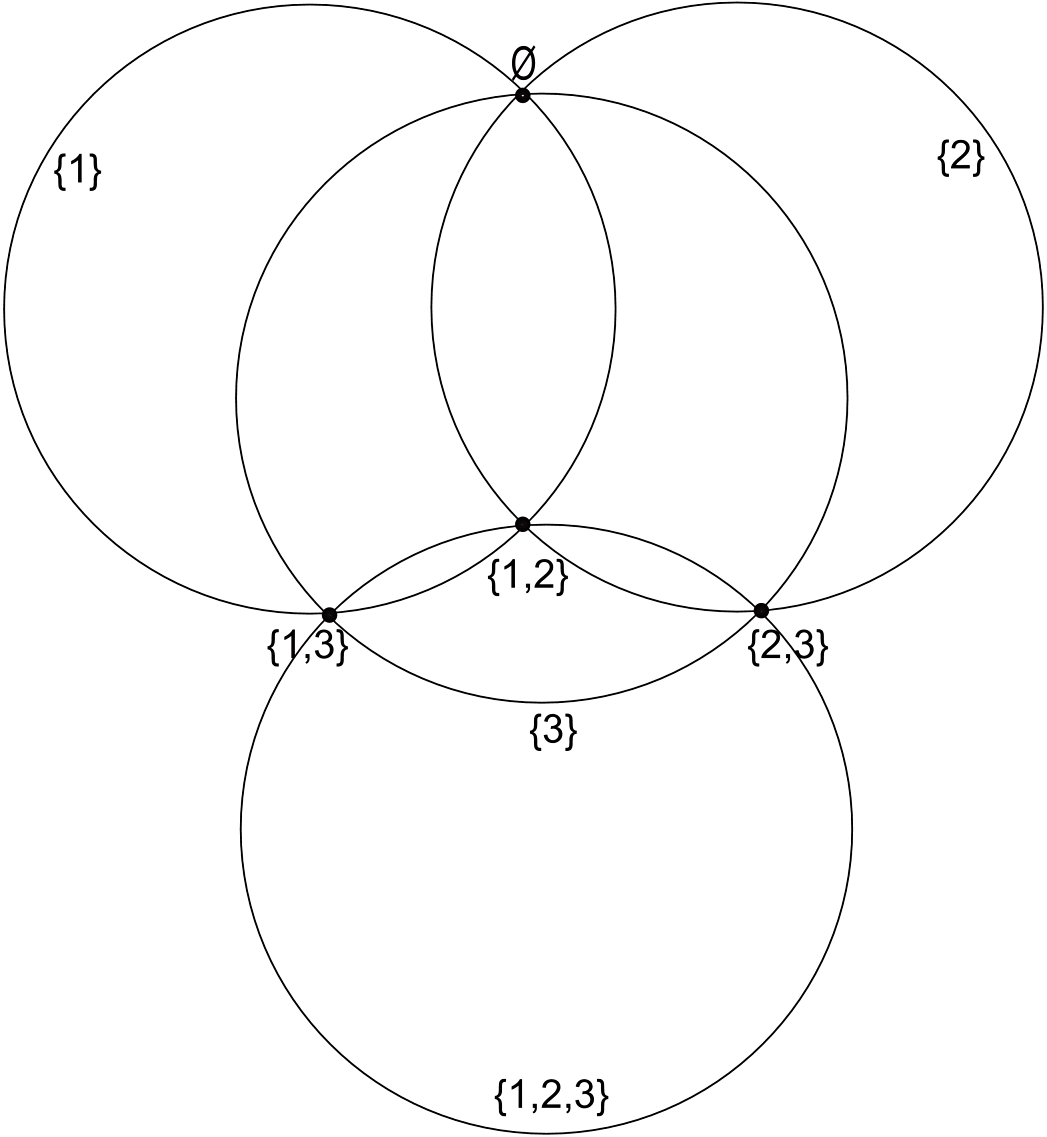}  
\end{center}
\caption{$\HervSpace(3)$: a ``spherical'' triangle}
\label{fig:herv3}
\end{figure}

\begin{figure}[t]
\begin{center}
   \includegraphics{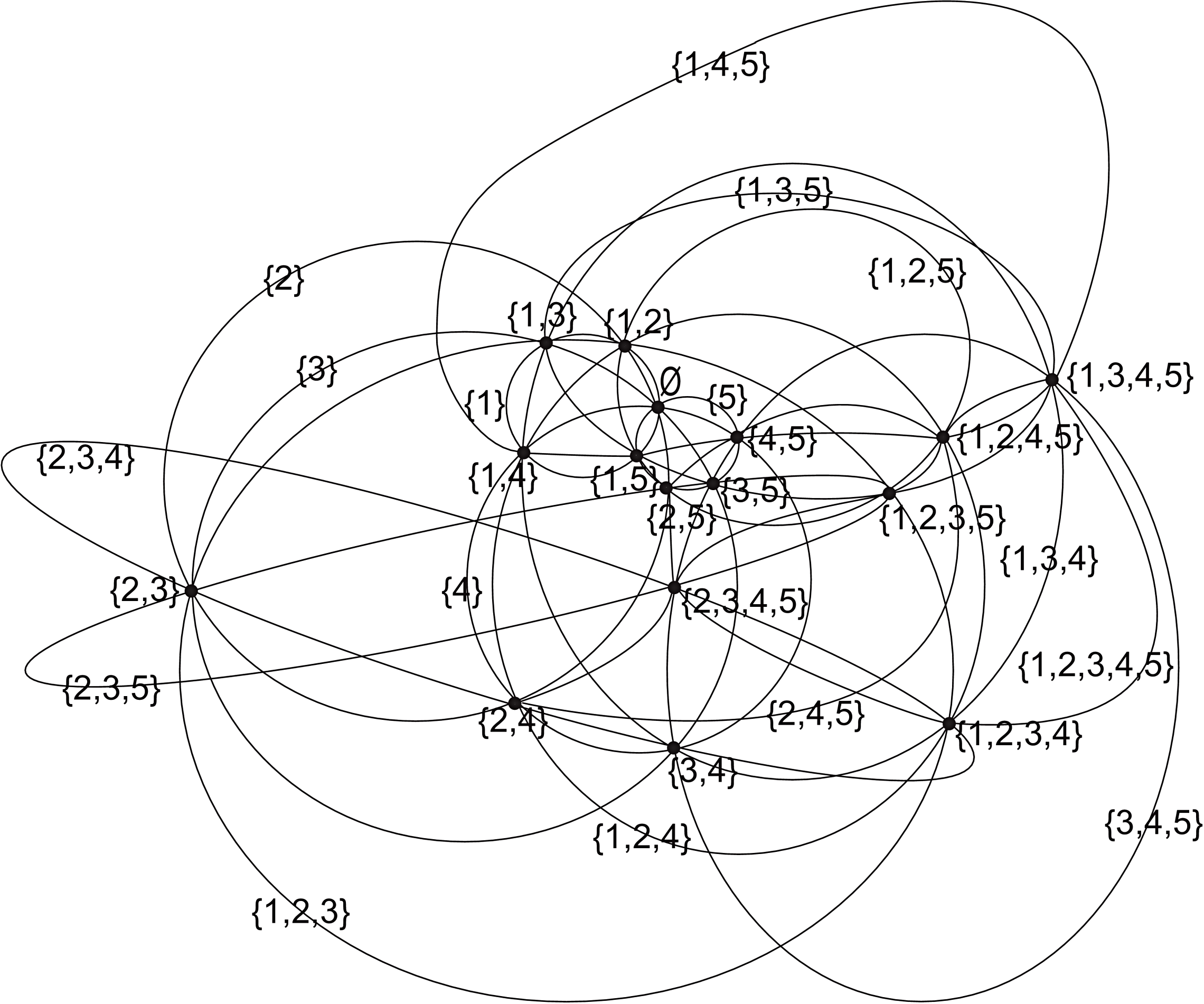}
\end{center}
\caption{$\HervSpace(5)$: the points labelled by the elements of 
$\sub_2(\{ 1,2,3,4,5 \})$ form a Desargues configuration (one can easily check it
using the combinatorial representation; it is fairly not obvious observing the 
figure)}
\label{fig:herv5}
\end{figure}

\begin{exm}\label{exm:gras}
  Let $X$ be a set and $k$ be an integer with $1 < k+1 < |X|$. 
  On the set $\sub_k(X)$ we define two families of its subsets.
  The sets $\topof(b) = \{ z\in\sub_k(X)\colon z \subset b \}$ with 
  $b \in\sub_{k+1}(X)$ are called {\em tops},
  and the sets 
  $\starof(h) = \{ z\in\sub_k(X)\colon h\subset z \}$ with 
  $h\in\sub_{k-1}(X)$ are {\em the stars}.
  Let $\tops_k(X)$ be the set of all the tops and $\stars_k(X)$ be the set of all the stars
  defined on $\sub_k(X)$.
  The incidence structure
\begin{equation}
  \GrasSpace(X,k) := \struct{\sub_k(X),\tops_k(X)}, 
\end{equation}
  called a {\em combinatorial Grassmannian}, is a partial linear space, 
  and for finite $|X|  =:n$ it is a
  $\konftyp({\binom{n}{k}},{n-k},{\binom{n}{k+1}},{k+1})$-configuration 
  (see e.g. \cite{perspect}, \cite{combver}).
  $\GrasSpace(4,2)$ is the Veblen configuration, $\GrasSpace(5,2)$ is the Desargues
  configuration, and from among all the combinatorial Grassmannians the so called
  {\em generalized Desargues configurations} $\GrasSpace(X,2)$ appear
  most important.
  \par
  Write
\begin{equation}
  {\bf K}^\dagger(X,k) := \struct{\sub_k(X),\tops_k(X) \cup \stars_k(X)};
\end{equation}
  The incidence structure ${\bf K}^\dagger(X,k)$ 
  is a weak chain structure which satisfies \eqref{circles:ax1}.
  It is 
  \begin{itemize}\def\labelitem{--}\itemsep-2pt
  \item 
    the structure of the maximal cliques of $\GrasSpace(X,k)$, and
  \item
    a (closed) substructure of $\HervSpace(X)$: 
    if $k$ is even, then each block of $\HervSpace(X)$
    that has two points common with the point set of ${\bf K}^\dagger(X,k)$ is actually
    a block of ${\bf K}^\dagger(X,k)$, and dually, when $k$ is odd.
  \end{itemize}
  If $n = 2k$ then
  ${\bf K}^\dagger(X,k)$ is a 
  $\konftyp({\binom{2k}{k}},{2k},{2\binom{2k}{k+1}},{k+1})$-configuration.
\myend
\end{exm}

The following is also evident:
\begin{fact}\label{fct:wstep}
 If $X_1\subset X_2$ then $\HervSpace(X_1)$ is a ($(2,2)-closed$)
 subconfiguration of $\HervSpace(X_2)$.
\end{fact}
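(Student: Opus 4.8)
The plan is to check the two defining conditions of a $(2,2)$-closed substructure directly against the set-theoretic description \eqref{eq:cx:def}. Write $S_0$ and $\chains_0$ for the points and blocks of $\HervSpace(X_1)$ (the even, resp.\ odd, subsets of $X_1$) and $S,\chains$ for those of $\HervSpace(X_2)$. Since $X_1\subset X_2$, every even (resp.\ odd) subset of $X_1$ is an even (resp.\ odd) subset of $X_2$, so $S_0\subseteq S$ and $\chains_0\subseteq\chains$; moreover two subsets of $X_1$ are direct successors as subsets of $X_1$ exactly when they are as subsets of $X_2$, so the incidence $\predces\cup\succes$ restricts correctly and $\HervSpace(X_1)$ is a substructure of $\HervSpace(X_2)$. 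It then remains to verify: (i) every $C\in\chains$ incident to at least two points of $S_0$ lies in $\chains_0$, and (ii) every $a\in S$ incident to at least two blocks of $\chains_0$ lies in $S_0$.

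Both reduce to one counting observation about the successor relation. The points of $S$ incident to a block $C\in\chains$ --- and, dually, the blocks of $\chains$ through a point $a\in S$ --- are exactly the subsets of $X_2$ obtained from $C$ (resp.\ $a$) by deleting or adjoining a single element. So for a finite subset $Y\subseteq X_2$ I would count, as a function of $|Y\setminus X_1|$, how many of these single-element neighbours are contained in $X_1$. The key claim is: if $|Y\setminus X_1|\geq 2$, then no neighbour of $Y$ is contained in $X_1$, because deleting one element of $Y$ leaves an element of $Y\setminus X_1$ behind while adjoining an element cannot cure $Y\not\subseteq X_1$; if $|Y\setminus X_1|=1$, then the unique neighbour contained in $X_1$ is obtained by deleting that single outside element; and if $Y\subseteq X_1$, then $Y$ itself is a subset of $X_1$.

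With this claim both closure conditions follow as contrapositives. For (i), if $C\notin\chains_0$ then $C\not\subseteq X_1$, so $|C\setminus X_1|\geq 1$ and by the claim at most one neighbour of $C$ is contained in $X_1$; since those neighbours are precisely the points of $S_0$ incident to $C$, the block $C$ carries fewer than two points of $S_0$. For (ii), if $a\notin S_0$ then $a\not\subseteq X_1$, and dually at most one block of $\chains_0$ passes through $a$. Contraposing yields exactly (i) and (ii), so $\HervSpace(X_1)$ is $(2,2)$-closed.

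I expect no real obstacle; the statement is a bookkeeping exercise. The only points needing care are the correct reading of the closure conditions in terms of incidence (the quantity ``$|C\cap S_0|$'' being the number of points of $S_0$ incident to $C$, not a naive intersection of labels), and the fact that $X_2$ may be infinite --- but the neighbour count inside $X_1$ is governed solely by the finite set $Y\setminus X_1$, so an infinite $X_2\setminus X_1$ causes no trouble. Finally, the symmetry between (i) and (ii) is just the point--block (even--odd) duality of $\HervSpace(\cdot)$, so the two verifications are the same computation read twice.
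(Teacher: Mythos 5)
Your proof is correct: the paper offers no argument for this fact, stating it as ``evident'' immediately after \ref{fct:hypcub}'s context, and your verification --- the single counting observation that a finite $Y\subseteq X_2$ with $Y\not\subseteq X_1$ has at most one one-element-modification neighbour contained in $X_1$ (namely $Y$ minus its unique outside element, when $|Y\setminus X_1|=1$), applied to blocks for condition (i) and dually to points for condition (ii) --- is precisely the routine check the authors leave to the reader. Your care over the incidence-based reading of $|C\cap S_0|$ and over infinite $X_2$ (only the finite set $Y\setminus X_1$ matters) is also correct, so there is nothing to add.
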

\begin{fact}\label{fct:hypcub}
  The Levi graph of the Cox configuration $\HervSpace(X)$ is a hypercube graph.
\end{fact}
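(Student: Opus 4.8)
The plan is to exhibit an explicit isomorphism that is the identity on the underlying subsets of $X$. Recall first the two objects in play. The hypercube graph $Q_n$, with $n = |X|$, is (up to isomorphism) the graph whose vertices are all the subsets of $X$ and in which two subsets are joined by an edge precisely when they differ in exactly one element, i.e.\ when their symmetric difference is a singleton. The Levi graph of an incidence structure, on the other hand, is the bipartite graph on the disjoint union of its point set and its block set, with an edge joining a point to a block exactly when the two are incident.

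First I would unwind the incidence relation of $\HervSpace(X)$. By \eqref{eq:cx:def} a point is an even-cardinality subset of $X$, a block is an odd-cardinality subset, and incidence is $\predces \cup \succes$. Since $\predces$ is the direct-successor (covering) relation in the inclusion poset on subsets, $a \predces b$ holds iff $a \subset b$ with $|b \setminus a| = 1$, and dually for $\succes$. Hence a point $p$ and a block $c$ are incident iff one of them arises from the other by adjoining a single element of $X$; equivalently, iff their symmetric difference is a singleton.

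Next I would identify the vertex sets and then match edges. The vertex set of the Levi graph is the disjoint union of the even and the odd subsets of $X$, which is exactly the family of all subsets of $X$, i.e.\ the vertex set of $Q_n$; so the identity map on subsets is a bijection between the two vertex sets. In $Q_n$ an edge joins two subsets whose symmetric difference is a singleton, and adjoining or removing a single element reverses parity, so any such edge necessarily runs between an even subset and an odd subset, that is, between a point and a block of $\HervSpace(X)$. Thus every edge of $Q_n$ is admissible in the (bipartite) Levi graph, and, by the description of incidence above, a point and a block are adjacent in the Levi graph iff they differ by one element, which is precisely the adjacency relation of $Q_n$. Consequently the identity on the subsets of $X$ carries edges to edges in both directions and is the desired isomorphism.

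I do not expect a genuine obstacle here: the statement is in essence a translation between two descriptions of the same combinatorial object. The only point requiring a moment's care is the parity bookkeeping, namely the verification that the even/odd partition of the subsets of $X$ coincides with the point/block bipartition and with the natural bipartition of the hypercube, i.e.\ that the covering relation $\predces$ yields exactly the Hamming-distance-one adjacencies and never links two subsets of the same parity. Once this is confirmed the isomorphism is immediate.
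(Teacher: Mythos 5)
Your argument is correct for finite $X$ and coincides with what the paper treats as self-evident (the fact is stated without proof): incidence in $\HervSpace(X)$ is the covering relation $\predces\cup\succes$ of the subset order, i.e.\ symmetric difference a singleton; such a step flips parity, so the even/odd partition of subsets is simultaneously the point/block bipartition and the canonical bipartition of the hypercube, and the identity map on subsets is the isomorphism. Your ``parity bookkeeping'' paragraph is exactly the right thing to check, and there is indeed no hidden obstacle in the finite case.

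There is, however, one slip that matters because the fact is asserted for arbitrary $X$ (and the paper needs countable $X$, cf.\ Theorem~\ref{thm:reprezent}): in the definition \eqref{eq:cx:def} the index $k$ is an integer, so the points and blocks of $\HervSpace(X)$ are only the \emph{finite} even and odd subsets of $X$. When $X$ is infinite, your claim that the vertex set of the Levi graph ``is exactly the family of all subsets of $X$'' is false: the hypercube graph on all of $\sub(X)$, i.e.\ $\struct{\sub(X),\predces\cup\succes}$, is disconnected (two subsets lie in the same component only if their symmetric difference is finite), and the Levi graph of $\HervSpace(X)$ is just its connected component through $\emptyset$, the induced graph on $\sub_{<\omega}(X)$. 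This is precisely the subtlety the paper itself spells out in the paragraph following the fact, when discussing the converse of~\ref{fct:hypcub}. The repair is trivial --- run your identity-map argument on $\sub_{<\omega}(X)$, where the Hamming-distance-one adjacency and parity argument go through unchanged --- but as written the vertex-set identification does not hold in the infinite case.
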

Since the hypercube graphs 
(see \cite{bryant}, \cite{coxet}, \cite{obrazki}, and many others)
appear one of most ``classical'' graphs, the above
observation explains why Cox configurations deserve some interest 
also in the context of graph theory.
Let us mention, as an example, one observation:
{\em If $8\leq m\leq n$
and $4 | m$, $8 | n$ or $8 | m$, $4 | n$
then the complete bipartite graph $K_{m,n}$ can be decomposed into
$Q_4$-cubes i.e. $K_{m,n}$ can be presented as the union of the
Levi graphs of the M{\"o}bius $M_8$ configuration}
(a consequence of \cite[Thm. 2.2]{bryant}).
Other connections of this sort are indicated at the end of Section \ref{sec:reprezenty}.

The converse of \ref{fct:hypcub} is slightly more complex.
Let $X$ be arbitrary and $\goth K$ be the hypercube graph
$\struct{\sub(X),\predces \cup \succes}$.
If $X$ is infinite then $\goth K$ is disconnected. Each two its connected components
are isomorphic, and the connected component of $\emptyset$ is 
the set $\sub_{<\omega}(X)$ of the finite subsets of $X$.
And, in turn, the restriction of $\goth K$ to $\sub_{<\omega}(X)$
is the Levi graph of $\HervSpace(X)$.


\section{Representations and interpretations}\label{sec:reprezenty}

The (geometrical) procedure which leads to $\goth H$ starting from the Veblen configuration
can be easily generalized as below. Just observe the following facts:
\begin{fact}
  Let $u\in\sub_k(X)$ be a point of $\HervSpace(X)$. The family of blocks of $\HervSpace(X)$
  which pass through $u$ with the point $u$ deleted is the generalized Veblen
  configuration (it consists of $|X|-k$ blocks each two of them having a common point).
\end{fact}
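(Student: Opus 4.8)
The plan is to read off both the count and the pairwise-intersection property directly from the combinatorics of $\predces$, and then to recognise the outcome as the star-structure of a combinatorial Grassmannian. First I would record the blocks through $u$. Since $u$ has even rank $k$, a block carrying $u$ is an odd superset of $u$ differing from it in a single element, i.e.\ it has the form $C_x := u \cup \{x\}$ with $x \in X\setminus u$, and distinct $x$ give distinct $C_x$. As $|u| = k$, there are exactly $|X|-k$ such blocks, which is the asserted number; they are the blocks produced by the generative procedure described at the opening of this section.

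Second, I would verify that any two of them meet once the vertex $u$ has been removed. Given distinct $x,x' \in X\setminus u$, the set $u \cup \{x,x'\} \in \sub_{k+2}(X)$ is an (even) point of $\HervSpace(X)$ distinct from $u$, and from $u\cup\{x\} \predces u\cup\{x,x'\}$ and $u\cup\{x'\} \predces u\cup\{x,x'\}$ one reads off that it lies on both $C_x$ and $C_{x'}$. Hence every two blocks of the family have a common point; this is the geometric content of the statement, and it is immediate from the definition of the successor relation.

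Third, I would pin down the whole incidence structure by a rank count. Apart from $u$ itself, the points carried by $C_x$ are the sets $(u\setminus\{z\})\cup\{x\}$ with $z\in u$, together with the sets $u\cup\{x,w\}$ with $w\in X\setminus(u\cup\{x\})$. A set of the first kind contains $x$ but no second new element, so for $x'\neq x$ it is not contained in $u\cup\{x'\}$ and therefore lies on no $C_{x'}$; it has rank $1$. The sets of the second kind are exactly the common points found above. Consequently, deleting $u$ and keeping only the points of rank at least two leaves precisely the $\binom{|X|-k}{2}$ sets $u\cup\{x,x'\}$, each incident with exactly the two blocks $C_x,C_{x'}$.

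Finally, the assignments $C_x \mapsto \starof(\{x\})$ and $u\cup\{x,x'\} \mapsto \{x,x'\}$ exhibit an isomorphism of the resulting structure onto $\struct{\sub_2(X\setminus u),\stars_2(X\setminus u)}$, the star-structure of $\GrasSpace(X\setminus u,2)$ in the notation of \ref{exm:gras}; this is the generalized Veblen configuration on $|X|-k$ blocks, in which, as required, each two blocks meet. Specialising to $X=\{1,2,3,4\}$ and $u=\emptyset$ returns the Veblen configuration of the first Fact of Section~\ref{sec:history}. I expect the only delicate step to be the rank count of the third paragraph: the pairwise-intersection property is trivial, but it is that count which rules out the spurious rank-one points and thereby identifies the family with the named configuration rather than with some larger incidence structure.
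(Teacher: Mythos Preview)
Your opening step is where the argument breaks. You assert that ``a block carrying $u$ is an odd superset of $u$ differing from it in a single element,'' but the incidence in $\HervSpace(X)$ is $\predces \cup \succes$: the point $u\in\sub_k(X)$ is incident not only with the $|X|-k$ blocks $u\cup\{x\}$, $x\in X\setminus u$, but also with the $k$ blocks $u\setminus\{z\}$, $z\in u$. So the bundle through $u$ has $|X|$ blocks, not $|X|-k$. (The printed ``$|X|-k$'' is a slip in the statement: $\HervSpace(n)$ is a $\konftyp(2^{n-1},n,2^{n-1},n)$-configuration, and by Cor.~\ref{cor:cx:homog} it is point-transitive, so the bundle through every point is isomorphic to the bundle through $\emptyset$, which visibly has $|X|$ blocks. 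Compare also the first Fact of Section~\ref{sec:history}, where the bundle through \emph{any} point of $\HervSpace(4)$ is the ordinary Veblen configuration on four lines.)

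Your second and third paragraphs are fine as far as they go, but they need to be supplemented by the missing cases. Two blocks $u\setminus\{z\}$ and $u\setminus\{z'\}$ meet again in $u\setminus\{z,z'\}\in\sub_{k-2}(X)$, and a block $u\setminus\{z\}$ meets $u\cup\{x\}$ in $(u\setminus\{z\})\cup\{x\}\in\sub_k(X)$; your rank count then shows these are the only rank-$2$ points. The identification at the end should therefore be with a generalized Veblen configuration on $|X|$ blocks (one may send $u\cup\{x\}\mapsto\{x\}$ and $u\setminus\{z\}\mapsto\{z\}$ to get a bijection with $\sub_1(X)$, and the rank-$2$ points with $\sub_2(X)$), not on $|X|-k$.
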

\begin{fact}\label{fct:graskliki}
  Let $k$ be an integer. Note that the elements of $\sub_{k+1}(X)$ are {\em simultaneously
  the lines} of $\GrasSpace(X,k)$ and {\em the points} of $\GrasSpace(X,k+1)$.
  A (maximal) clique $\cal K$ contained in $\sub_{k+1}(X)$ can be either 
  \begin{itemize}\itemsep-2pt
  \item
    the set $\starof(u)$ with $u\in\sub_{k}(X)$: then $\cal K$ consists of the bundle of 
    lines of $\GrasSpace(X,k)$ through a point, or 
  \item
    the set $\topof(U)$ with $U\in\sub_{k+2}(X)$, a line of $\GrasSpace(X,k+1)$: 
    then $\cal K$ can be viewed as the lines of the
    generalized Veblen configuration $\GrasSpace(U,k)$, and geometrically this family
    can be visualized as a {\em plane} contained in $\GrasSpace(X,k)$.
  \end{itemize}
\end{fact}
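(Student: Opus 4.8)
The plan is to read $\sub_{k+1}(X)$ as the vertex set of a Johnson-type graph and to classify its maximal cliques by a short intersection-size count. First I would fix the incidence convention. For $b_1,b_2\in\sub_{k+1}(X)$ the lines $b_1,b_2$ of $\GrasSpace(X,k)$ share a point exactly when $b_1\cap b_2\in\sub_k(X)$, i.e.\ when $|b_1\cap b_2|=k$, equivalently $|b_1\cup b_2|=k+2$; and this is precisely the condition that $b_1,b_2$, viewed as points of $\GrasSpace(X,k+1)$, be collinear (they then lie on the top $\topof(b_1\cup b_2)$). Thus the two readings of $\sub_{k+1}(X)$ induce the same ``clique'' relation, and it is immediate that every $\starof(u)=\{\,u\cup\{x\}\colon x\in X\setminus u\,\}$ and every $\topof(U)=\{\,U\setminus\{y\}\colon y\in U\,\}$ is a clique.

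The heart of the matter is a three-point lemma. Take pairwise collinear $b_1,b_2,b_3$ and set $u:=b_1\cap b_2\in\sub_k(X)$, $U:=b_1\cup b_2\in\sub_{k+2}(X)$, so that $b_1=u\cup\{a\}$ and $b_2=u\cup\{b\}$ with $U=u\cup\{a,b\}$. Writing $s:=|b_3\cap u|$ and using $a,b\notin u$, collinearity of $b_3$ with $b_1$ and with $b_2$ reads $s+|b_3\cap\{a\}|=k=s+|b_3\cap\{b\}|$, whence $a\in b_3\Leftrightarrow b\in b_3$. If neither lies in $b_3$ then $s=k$, so $u\subseteq b_3$ and $b_3\in\starof(u)$; if both lie in $b_3$ then $s=k-1$, so $b_3\subseteq U$ and $b_3\in\topof(U)$. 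Hence, relative to the fixed pair $b_1,b_2$, every further member of a clique is of star type or of top type.

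Next I would rule out mixing the two types inside one clique. Since $\starof(u)\cap\topof(U)=\{b_1,b_2\}$, a genuinely star-type member has the form $b_3=u\cup\{c\}$ with $c\notin U$, and a genuinely top-type member has the form $b_4=U\setminus\{y\}$ with $y\in u$. A direct computation gives $b_3\cap b_4=u\setminus\{y\}$, so $|b_3\cap b_4|=k-1$ and $b_3,b_4$ are not collinear --- a contradiction. Therefore every clique lies inside a single $\starof(u)$ or inside a single $\topof(U)$.

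It remains to note maximality and to read off the geometry. Any point collinear with three members $u\cup\{x_1\},u\cup\{x_2\},u\cup\{x_3\}$ of a star lies, by the lemma, either in $\starof(u)$ or in the two tops $\topof(u\cup\{x_1,x_2\})$ and $\topof(u\cup\{x_1,x_3\})$, whose intersection forces it into $u\cup\{x_1\}\in\starof(u)$; so $\starof(u)$ is maximal, and symmetrically so is $\topof(U)$. (This uses $|\starof(u)|\ge 3$, i.e.\ $|X|\ge k+3$, which holds whenever $\GrasSpace(X,k+1)$ is non-degenerate; for $|X|=k+2$ the single top $\topof(X)=\sub_{k+1}(X)$ absorbs every star.) Finally $\starof(u)$ is, by definition, the bundle of lines of $\GrasSpace(X,k)$ through the point $u$, while $\topof(U)$ consists exactly of the lines $b\subseteq U$, i.e.\ the lines of the sub-Grassmannian $\GrasSpace(U,k)$ with $|U|=k+2$, a generalized Veblen configuration --- the announced ``plane''. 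I expect the no-mixing step to be the only real obstacle: the three-point lemma classifies each added point only locally, so global consistency of the clique must be forced separately by the computation $|b_3\cap b_4|=k-1$; the remainder is bookkeeping with intersection sizes.
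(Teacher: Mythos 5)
Your proof is correct. Note that the paper offers no proof of this fact at all --- it is listed among things the reader should ``just observe,'' with the clique classification implicitly deferred to the cited literature on combinatorial Grassmannians --- so there is no argument to compare against; yours is the standard intersection-count classification and it is complete: the adjacency criterion $|b_1\cap b_2|=k$ (equivalently $|b_1\cup b_2|=k+2$) is right under both readings of $\sub_{k+1}(X)$, the three-point lemma and the no-mixing computation $|b_3\cap b_4|=k-1$ are sound, and you even handle the degenerate case $|X|=k+2$, where $\starof(u)$ fails to be maximal because the single top $\topof(X)$ swallows everything --- a boundary case the paper's blanket statement silently excludes via the standing hypothesis $1<k+1<|X|$ of its Example on $\GrasSpace(X,k)$. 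The only cosmetic remark: the no-mixing step applies once the clique has at least two elements, which is harmless since smaller maximal cliques do not occur in the non-degenerate range.
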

Fact \ref{fct:graskliki} enables us to present $\HervSpace(X)$ as a suitable completion
of the combinatorial Grassmannian $\GrasSpace(X,k)$.
\begin{repr}\label{repr:gras2cox}
  Observe that, formally, $\GrasSpace(X,0)$ is the structure consisting of the bundle
  of (one-element) lines through a fixed point, and $\GrasSpace(X,|X|-1)$ is the chain
  of points on a single line.
  \par
  Let $2\mathrel{|} k$, $0 < k < |X|$. The points of $\HervSpace(X)$ in $\sub_k(X)$ 
  are exactly the points of $\GrasSpace(X,k)$. 
  The lines of $\GrasSpace(X,k)$ are also blocks of $\HervSpace(X)$.
  With each ``plane" $U$ of $\GrasSpace(X,k)$ we associate a new ``improper" point
  $U^\infty$ and require that all the lines in this plane pass through $U^\infty$.
  These improper points correspond to the points of $\HervSpace(X)$ in $\sub_{k+2}(X)$.
  Clearly, added points are on one block 
  when the planes which determine them have a common line in $\GrasSpace(X,k)$. 
  With any (maximal) family of planes that do not have a line in common
  but pairwise are neighbor (if such a family exists in $\GrasSpace(X,k)$)
  we add a new block which joins their improper points.
  The structure of the improper points and new blocks is (isomorphic to)
  $\GrasSpace(X,k+2)$.
  \par
  Next, with each maximal clique $\cal K$ (a simplex) in $\GrasSpace(X,k)$ we 
  associate a ``clique-block'' ${\cal K}^\circ$ that contains all the points in $\cal K$.
  These clique-blocks correspond to the blocks of $\HervSpace(X)$ in $\sub_{k-1}(X)$.
  As above, with each family $\cal F$ of maximal cliques which do not have a common point but 
  pairwise intersect (if it exists in $\GrasSpace(X,k)$)
  we associate a new ideal point common  to 
  all the clique-blocks ${\cal K}^\circ$ with ${\cal K}\in{\cal F}$.
  And, as above, the structure of these ideal points and clique-blocks is
  $\GrasSpace(X,k-2)$.
  \par
  Continuing in this fashion, finally,
  we end up with $\HervSpace(X)$.
\end{repr}
Presentation \ref{repr:gras2cox} may be especially useful when applied
to a generalized Desargues configuration $\GrasSpace(X,2)$.

\medskip

Let us stress on that the original Cox configuration (possibly, it would be
better to call it the Steiner-Miquel configuration, as proposed in Section \ref{sec:history}), 
is closely related to the configuration
of the Miquel Axiom (see Figure \ref{fig:MiqAx}), 
which is nowadays called the Miquel Configuration.
And thus it is usually considered as a configuration of circles rather than of planes.
It is mentioned in \cite{coxet} that the configuration $\HervSpace(X)$ can be realized on 
a $2$-sphere for arbitrary set $X$. Justification of this fact seems incomplete, though.
Let us make some comments on the subject.
\begin{prop}\label{prop:cox:miquelian}
  The Cox configuration $\HervSpace(X)$ satisfies the Miquel Axiom
  for every set $X$.
\end{prop}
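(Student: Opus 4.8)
The plan is to linearize the entire configuration over $\mathbb{F}_2$. By Fact~\ref{fct:hypcub} the Levi graph of $\HervSpace(X)$ is the hypercube on the finite subsets of $X$, so I would regard the points (the even subsets) and the blocks (the odd subsets) as vectors in the $\mathbb{F}_2$-vector space of finite subsets of $X$ under symmetric difference $\triangle$, a point $P$ being incident with a block $C$ (via $\predces \cup \succes$) exactly when $P \triangle C$ is a singleton. The decisive observation is that, under this dictionary, the Miquel Axiom (Figure~\ref{fig:MiqAx}) becomes a short forced computation. Moreover, since any single instance of the configuration involves only finitely many sets and symmetric differences stay finite, the cardinality of $X$ plays no role and the same argument works uniformly for every $X$, settling the ``every set $X$'' clause at once.

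The first genuine step is a coordinatization lemma I would record at the outset: two distinct points $P,Q$ lie on a common block iff $P \triangle Q$ has exactly two elements, say $P \triangle Q = \{u,v\}$, in which case the two blocks through them (cf.~\eqref{circles:ax1}) are $P \triangle \{u\}$ and $P \triangle \{v\}$; in particular the symmetric difference of the two blocks through a given pair of points equals the symmetric difference of the points themselves. Using the block $A$ carrying $a_1,a_2,a_3,a_4$, I then write $a_i = A \triangle \{\alpha_i\}$ with the four $\alpha_i$ pairwise distinct (by distinctness of the $a_i$). Applying the lemma to the pair $a_i,a_{i+1}$, which lies on the two distinct blocks $A$ and the side block $C_i$ (the one through $a_i,b_i,a_{i+1},b_{i+1}$), yields $A \triangle C_i = a_i \triangle a_{i+1} = \{\alpha_i,\alpha_{i+1}\}$, hence $C_i = A \triangle \{\alpha_i,\alpha_{i+1}\}$ for $i$ taken mod $4$.

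It remains to locate the $b_i$. Each $b_i$ lies on the two consecutive side blocks $C_{i-1}$ and $C_i$, so $b_i \triangle C_i$ and $b_i \triangle C_{i-1}$ are singletons whose symmetric difference is $C_{i-1} \triangle C_i = \{\alpha_{i-1},\alpha_{i+1}\}$; thus $b_i \triangle C_i \in \{\{\alpha_{i-1}\}, \{\alpha_{i+1}\}\}$. Since $a_i \triangle C_i = \{\alpha_i\} \triangle \{\alpha_i,\alpha_{i+1}\} = \{\alpha_{i+1}\}$, the choice $b_i \triangle C_i = \{\alpha_{i+1}\}$ would force $b_i = a_i$, which is excluded; hence $b_i \triangle C_i = \{\alpha_{i-1}\}$, i.e. $b_i = A \triangle \{\alpha_{i-1},\alpha_i,\alpha_{i+1}\}$. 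Setting $B := A \triangle \{\alpha_1,\alpha_2,\alpha_3,\alpha_4\}$ I then get $b_i \triangle B = \{\alpha_{i+2}\}$, a singleton, so each $b_i$ is incident with the single set $B$; as $A$ is odd and $\{\alpha_1,\alpha_2,\alpha_3,\alpha_4\}$ is even, $B$ has odd cardinality and is therefore a genuine block. Thus $b_1,b_2,b_3,b_4$ all lie on $B$, and the axiom closes.

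The conceptual crux is the first paragraph's observation that the axiom linearizes over $\mathbb{F}_2$; once that is in place the rest is forced. The one place demanding care is the bookkeeping of the singleton ``toggles'' in the last step: I must invoke the distinctness hypotheses exactly where they are needed---distinctness of the blocks $A, C_i$ to coordinatize the side blocks, and distinctness of $b_i$ from $a_i$ to discard the spurious choice $\{\alpha_{i+1}\}$---so that the cyclic pattern $b_i = B \triangle \{\alpha_{i+2}\}$ emerges unambiguously rather than collapsing to a degenerate coincidence.
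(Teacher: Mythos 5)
Your proof is correct---the coordinatization lemma, the identification $C_i = A \triangle \{\alpha_i,\alpha_{i+1}\}$, the elimination of the spurious toggle via $b_i \neq a_i$, and the parity check that $B = A \triangle \{\alpha_1,\alpha_2,\alpha_3,\alpha_4\}$ is a genuine (odd, finite) block all hold up---but you should recognize it as the paper's computation in equivariant clothing rather than a wholly different argument. The paper first invokes homogeneity (Cor.~\ref{cor:cx:homog}) to normalize $a_2 = \emptyset$, after which everything is read off directly: $A = \{k\}$, $a_1 = \{i,k\}$, $a_3 = \{j,k\}$, $a_4 = \{k,l\}$, then $b_1 = \{i,l\}$, $b_2 = \{i,j\}$, $b_3 = \{j,l\}$, $b_4 = \{i,j,k,l\}$, all incident with $B = \{i,j,l\}$. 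Your $\mathbb{F}_2$-linearization is precisely the translation group $\{\tau_A\}$ underlying that homogeneity: applying $\tau_{a_2}$ (symmetric difference with $a_2$) to your formulas, with $\alpha_1,\alpha_2,\alpha_3,\alpha_4$ becoming $i,k,j,l$, reproduces the paper's normalized data verbatim, and your closing block $A \triangle \{\alpha_1,\alpha_2,\alpha_3,\alpha_4\}$ becomes $\{i,j,l\}$, so the two proofs are conjugate under the very group the paper's WLOG appeals to. The trade-off: the paper's version is three lines but rests on a forward reference (Cor.~\ref{cor:cx:homog} is proved only in Section~\ref{sec:cx:automorfy}, after this proposition) and compresses the determination of the $b_i$---where the distinctness hypotheses are silently consumed---into ``we compute''; your version is self-contained, needs nothing beyond the definition of $\HervSpace(X)$, handles arbitrary $X$ without comment (as you note, only finite subsets ever occur), and makes explicit exactly which distinctness assumptions exclude the degenerate branches. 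Either way the mathematical content is the same short symmetric-difference calculation.
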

\begin{proof}
  Let points $a_i,b_i$ satisfy the assumptions of the Miquel Axiom 
  (cf. Figure \ref{fig:MiqAx}). Without loss of generality (cf. \ref{cor:cx:homog})
  we can assume that $a_2 = \emptyset$; then $A = \{ k \}$, $a_1 = \{i,k\}$, 
  $a_3 = \{ j,k \}$, $a_4 = \{ k,l \}$ for some $i,j,k,l\in X$. From the definition of 
  $\HervSpace(X)$ we compute that $b_2 = \{i,j\}$, $b_1 = \{ i,l \}$,
  $b_3 = \{ j,l \}$, and $b_4 = \{ i,j,k,l \}$, which are on the block
  $B = \{ i,j,l \}$.
\end{proof}

\def\naszasfera{{\cal S}}
\begin{prop}\label{prop:cox:planar}
  Let $\HervSpace(X)$ be realized in a weak chain structure $\goth M$ and let
  $\struct{\naszasfera,\chains_0}$ be a subspace of $\goth M$ which contains a pair of 
  blocks of $\HervSpace(X)$ meeting in two distinct points.
  Then $\HervSpace(X)$ is contained in $\naszasfera$.
\end{prop}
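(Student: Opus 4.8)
The plan is to transfer the problem to a purely combinatorial propagation inside $\HervSpace(X)$ and then to sweep upward by an induction on the cardinality (rank) of subsets. First I would replace the subspace $\struct{\naszasfera,\chains_0}$ of $\goth M$ by the sub-configuration it induces on $\HervSpace(X)$: the points of $\HervSpace(X)$ that lie in $\naszasfera$ and the blocks of $\HervSpace(X)$ that lie in $\chains_0$. Since $\naszasfera$ is $(3,1)$-closed in $\goth M$ and every block of $\HervSpace(X)$ is in particular a block of $\goth M$, this induced pair obeys two closure rules phrased entirely inside $\HervSpace(X)$: a block carrying at least three of its points already in $\naszasfera$ must itself lie in $\chains_0$, and a point lying on a block already in $\chains_0$ must lie in $\naszasfera$. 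Both rules are invariant under automorphisms of $\HervSpace(X)$, so by the homogeneity recorded in \ref{cor:cx:homog} I may replace the configuration by its image under a suitable automorphism and thereby normalize the two given blocks.

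The two given blocks meet in two points; I would fix one of them, say $p$, and apply the automorphism carrying $p$ to $\emptyset$. A block incident with $\emptyset$ can only be a direct successor of $\emptyset$ (as $\emptyset$ is minimal for $\predces$), i.e. a one-element set; hence after normalization the two blocks are singletons $\{a\}$ and $\{b\}$, and their second common point is necessarily $\{a,b\}$. The second closure rule now drops into $\naszasfera$ every point on $\{a\}$ or on $\{b\}$, namely $\emptyset$ together with all pairs $\{a,x\}$ and $\{b,x\}$.

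The heart of the argument is reaching the first complete even rank. For any $c\in X\setminus\{a,b\}$ the singleton block $\{c\}$ carries the three points $\emptyset,\{a,c\},\{b,c\}$, all already in $\naszasfera$; the first closure rule therefore puts $\{c\}$ into $\chains_0$, whereupon the second rule sweeps every pair $\{c,x\}$ into $\naszasfera$. Thus every singleton is a block of $\chains_0$ and all of $\sub_2(X)$ lies in $\naszasfera$. I expect this bootstrapping of the bottom ranks to be the only delicate point, precisely because at rank two one must exploit the extra point $\emptyset$ to supply the third incidence; once an entire even rank is in hand the higher ranks come for free.

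From here the induction on rank is routine. Assuming every member of $\sub_{2k}(X)$ with $k\geq 1$ is already a point of $\naszasfera$, each $E\in\sub_{2k+1}(X)$ carries its $2k+1\geq 3$ subsets of size $2k$, all in $\naszasfera$, so $E\in\chains_0$ by the first rule; the second rule then forces every $(2k+2)$-superset of such an $E$ into $\naszasfera$, and since every member of $\sub_{2k+2}(X)$ contains some such $E$ (delete any one of its elements), the whole rank $2k+2$ enters $\naszasfera$. Starting from $\sub_2(X)$ this reaches every finite rank, so all points and blocks of $\HervSpace(X)$ belong to $\struct{\naszasfera,\chains_0}$, which is the assertion. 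The genuine obstacle is confined to the first two steps: justifying that the abstract homogeneity of $\HervSpace(X)$ may be used despite the realization in $\goth M$ (resolved by the intrinsic reformulation of the two closure rules) and reaching the first full rank; the inductive sweep itself is mechanical.
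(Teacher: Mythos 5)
Your proof is correct and takes essentially the same route as the paper: normalize via homogeneity (\ref{cor:cx:homog}) so that the two given blocks are singletons through $\emptyset$, use the $(3,1)$-closure of a subspace to capture every singleton block and all of $\sub_2(X)$, and then sweep up through the ranks by the same alternating point/block induction. Your explicit intrinsic restatement of the two closure rules inside $\HervSpace(X)$ only makes precise the ``without loss of generality'' that the paper leaves implicit.
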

\begin{proof}
  Let us refer to the blocks of $\goth M$ as to {\em circles}.
  Without loss of generality we can assume that 
  $\naszasfera$ contains the circles $\{i_1\}$ and $\{i_2\}$
  meeting in $\emptyset$ and $\{i_1,i_2\}$, for some distinct $i_1,i_2\in X$. 
  Each one of the circles
  $\{i\}$, $X\ni i\neq i_1,i_2$ crosses our two circles in three distinct points:
  $\emptyset$, $\{i_1,i\}$, and $\{i_2,i\}$, and thus $\{ i\}$ lies on $\naszasfera$.
  Consequently, $\naszasfera$ contains all the points in $\sub_2(X)$. 
  \par
  Each circle in $\sub_{k+1}(X)$ with $k\geq 2$ contains at least three points in
  $\sub_k(X)$ so, if $\sub_k(X)\subset\naszasfera$, then each circle in $\sub_{k+1}(X)$
  lies on $\naszasfera$. 
  Each point in $\sub_{k+1}(X)$ lies on a circle in $\sub_k(X)$, so if the 
  circles in $\sub_k(X)$ all are subsets of $\naszasfera$, we have 
  $\sub_{k+1}(X)\subset\naszasfera$.
  This proves, inductively, that each point and each circle in $\sub(X)$
  lies on $\naszasfera$.
\end{proof}

In an arbitrary M{\"o}bius space $\goth M$ a pair of chains intersecting in two points
determines a plane of $\goth M$ which contains them, and the same 
holds, generally, in chain spaces associated with quadrics (Minkowski
spaces, Laguerre, etc.). Thus from \ref{prop:cox:planar} we directly derive that
the configuration $\HervSpace(X)$ is {\em planar}:
\begin{cor}\label{cor:cox:planar}
  Assume that the configuration $\HervSpace(X)$ is realized in a 
  chain space $\goth M$ associated with a quadric  
  (as an important particular case: let $\goth M$ be a  M{\"o}bius space).
  Then actually it is realized on a plane in $\goth M$.
\end{cor}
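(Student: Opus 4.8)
The plan is to deduce the statement directly from Proposition~\ref{prop:cox:planar}, whose hypothesis only asks for a single subspace of $\goth M$ that already contains two blocks of $\HervSpace(X)$ meeting in two distinct points. Once such a subspace is exhibited, the Proposition forces the entire realization to lie inside it, and it then remains only to recognize that subspace as a plane of $\goth M$. So the work splits into a combinatorial half (finding the doubly meeting pair of blocks) and a geometric half (producing the plane from the quadric structure).

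First I would exhibit the required pair of blocks abstractly, independently of the realization. Since $|X|\geq 3$ we may fix two distinct elements $i,j\in X$; the singletons $\{i\}$ and $\{j\}$ are odd-cardinality sets, hence blocks of $\HervSpace(X)$. Reading off their points from the successor relation, a point incident with $\{i\}$ is either $\emptyset$ or some $\{i,m\}$ with $m\neq i$, and symmetrically for $\{j\}$; therefore $\{i\}$ and $\{j\}$ share exactly the two points $\emptyset$ and $\{i,j\}$. Transporting this along the assumed realization, the two chains of $\goth M$ representing $\{i\}$ and $\{j\}$ are chains meeting in two distinct points.

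Next I would invoke the geometric hypothesis on $\goth M$, exactly as recorded in the paragraph preceding the statement: in a chain space associated with a quadric (in particular a M\"obius space) a pair of chains meeting in two points lies on a common plane, i.e.\ the ambient flat they span meets the quadric in a two-dimensional subquadric, and this section is a $(3,1)$-closed substructure $\struct{\naszasfera,\chains_0}$ of $\goth M$. By construction $\naszasfera$ contains both chains, and these are blocks of $\HervSpace(X)$ meeting in two distinct points. Proposition~\ref{prop:cox:planar} then applies verbatim and gives $\HervSpace(X)\subseteq\naszasfera$; thus the whole configuration is realized on the plane $\naszasfera$, as claimed.

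The only substantive step — the one I expect to carry the real content rather than routine bookkeeping — is the geometric fact that two chains meeting in two points determine a plane subspace of a quadric chain space. This is classical for M\"obius spaces and carries over to the remaining quadric models (Minkowski, Laguerre), so in the write-up I would simply cite it; the combinatorial location of a doubly meeting pair of blocks and the appeal to Proposition~\ref{prop:cox:planar} are then immediate.
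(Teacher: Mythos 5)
Your proposal is correct and takes essentially the same route as the paper, which derives the corollary directly from Proposition~\ref{prop:cox:planar} together with the classical fact, recorded in the paragraph immediately preceding the statement, that in a chain space associated with a quadric two chains meeting in two distinct points determine a plane (a subspace) containing them. Your explicit exhibition of the blocks $\{i\}$ and $\{j\}$ meeting in $\emptyset$ and $\{i,j\}$ merely makes concrete the pair that the paper's proof of Proposition~\ref{prop:cox:planar} already takes as its without-loss-of-generality starting point.
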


So, without loss of generality we can search for a realization of $\HervSpace(X)$ 
on a 2-sphere.
\begin{thm}[M{\"o}bius Representation]\label{thm:reprezent}
  Let $X$ be a finite (at least $3$-element) or countable set.
  The structure $\HervSpace(X)$ can be realized on a 2-sphere
  $\naszasfera$
  in a Euclidean 3-space, with the blocks interpreted as circles on $\naszasfera$.
\end{thm}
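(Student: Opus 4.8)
The plan is to build a faithful embedding of $\HervSpace(X)$ into the real inversive plane and to let the Miquel axiom perform all the closing-up work. Recall from Section~\ref{sec:history} that the $2$-sphere $\naszasfera$ with its circles is a Miquelian M\"obius space, i.e.\ a weak chain structure in which the Miquel axiom holds; hence every Steiner--Miquel configuration drawn on $\naszasfera$ closes. Since by Fact~\ref{fct:wstep} the finite $\HervSpace(X)$ are $(2,2)$-closed subconfigurations of the countable one, it suffices to treat a countable $X=\{x_1,x_2,\dots\}$ and to produce an injective incidence-preserving map $f$ sending the even subsets of $X$ to points of $\naszasfera$ and the odd subsets to circles on $\naszasfera$.

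First I would fix a point $P=f(\emptyset)$ and, for each $i$, a circle $c_i=f(\{x_i\})$ through $P$, postponing the precise (generic) choice of the $c_i$. The remaining values of $f$ are then forced by the combinatorics and defined by recursion on $|s|$: a point $f(s)$ with $|s|$ even is the second common point of two of the circles $f(s\setminus\{a\})$, $a\in s$ (the first being $f(s\setminus\{a,b\})$), while a circle $f(s)$ with $|s|$ odd, $|s|\ge 3$, is the circle through the $|s|$ points $f(s\setminus\{a\})$, $a\in s$. Concretely this is exactly the classical Cox recursion: the singletons are circles through $\emptyset$, the $2$-subsets are their second intersections, the $3$-subsets are the Miquel circles through the resulting triples, and so on, matching precisely the subset-incidences of $\HervSpace(X)$ computed as in the proof of Proposition~\ref{prop:cox:miquelian}.

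The content of the recursion is that it is \emph{consistent}: whenever $f(s)$ is obtainable along two different routes, the two outputs agree. Each such coincidence is, after unwinding one step, the thesis of a single Steiner--Miquel (equivalently, Miquel) closure among already-constructed points and circles, and therefore holds automatically on $\naszasfera$. Here Proposition~\ref{prop:cox:miquelian} is the combinatorial counterpart guaranteeing that the incidences we are forced to realize are exactly the Miquel-closed ones, so an easy induction on $|s|$ shows that $f$ is well defined and incidence-preserving; by Proposition~\ref{prop:cox:planar} and Corollary~\ref{cor:cox:planar} the image is moreover automatically confined to the subspace generated by any two of its doubly-meeting circles, i.e.\ it stays planar.

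The main obstacle is \emph{faithfulness}, that is, that $f$ is injective and that each circle $f(s)$ carries only the prescribed points, so that the image is a genuine copy of $\HervSpace(X)$ rather than a degenerate collapse. I would secure this by choosing the $c_i$ generically: the circles through $P$ form a two-parameter family, and each potential degeneracy among the finitely many objects built from $c_1,\dots,c_m$ (two subsets receiving the same value, an unwanted incidence, or a circle degenerating) is an algebraic condition excluding a nowhere-dense, lower-dimensional subset of that family. For finite $X$ only finitely many conditions arise; for countable $X$ one interleaves the choices, at stage $m$ avoiding the countably many bad sets accrued so far, which is possible by a Baire-category (or positive-measure) argument. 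A generic choice thus realizes $\HervSpace(X)$ faithfully by circles on $\naszasfera$. Alternatively, one may first run the recursion in a high-dimensional real Miquelian M\"obius space, where non-degeneracy is automatic, and then invoke Corollary~\ref{cor:cox:planar} to push the image onto a single $2$-sphere, trading the genericity bookkeeping for the collapse argument.
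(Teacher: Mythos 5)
Your construction is, in substance, the paper's own: fix circles through the image of $\emptyset$, define points of even rank as second intersection points and blocks of odd rank as circles through already-built points, and let the Miquel axiom of the real M\"obius plane do the closing. The paper merely organizes the recursion as an extension procedure, adjoining one new circle $\{n\}$ to a realization of $\HervSpace({\{1,\ldots,n-1\}})$ at a time, rather than fixing all the circles $c_i$ up front; for countable $X$ the two bookkeeping schemes amount to the same thing.

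The one place where you assert what the paper actually has to prove is the \emph{consistency} of the recursion. Saying that each forced coincidence is, ``after unwinding one step, the thesis of a single Miquel closure among already-constructed points and circles'' is true but not automatic, and Proposition \ref{prop:cox:miquelian} does not deliver it: that proposition says the abstract configuration satisfies the Miquel axiom, not that every coincidence your recursion requires is the closure of a Miquel figure \emph{all of whose remaining points and circles are already realized at earlier stages}. This is exactly the content of the two claims inside the paper's proof: to see that the circles $a=p\setminus\{n\}$, $p\setminus\{i\}$, $p\setminus\{j\}$ are concurrent one must manufacture the auxiliary circle $x\cup\{n\}$ with $x=a\setminus\{i,j,l\}$ (and dually the circle $x\cup\{l\}$ in the block case), verify that the eight points and six circles of the diagrams in Figures \ref{diag:extend1} and \ref{diag:extend2} are among those realized by the inductive hypothesis, and dispose of the small ranks ($k\leq 2$, resp.\ $k\leq 3$) separately via the base cases $n=3,4$. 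You name the right mechanism but leave this diagram-chase --- the entire technical work of Theorem \ref{thm:reprezent} --- undone. On the other side of the ledger, your genericity treatment of faithfulness (finitely many algebraic bad sets in the finite case, Baire-category interleaving in the countable case) is \emph{more} explicit than the paper, which only requires the newly chosen circle to be distinct from and non-tangent to the existing singleton circles and otherwise glosses degeneracy. One caution about your proposed alternative: running the recursion in a high-dimensional Miquelian space buys nothing, because by Proposition \ref{prop:cox:planar} the configuration is confined from the outset to the subspace spanned by any two of its doubly-meeting chains, i.e.\ to a plane ($2$-sphere) of the ambient space, so non-degeneracy there is the same problem as on $\naszasfera$ and is not ``automatic''.
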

\begin{proof}
  Set $n= |X|$.
  In view of natural connections of $\HervSpace(4)$ with the Miquel configuration,
  the claim is evident for $n=3,4$. 
  We prove that each realization of $\HervSpace({\{ 1,\ldots,n-1 \}})$
  on a 2-sphere in a Euclidean 3-space can be extended to a realization
  of $\HervSpace({\{ 1,\ldots,n \}})$ (comp. \ref{fct:wstep}).
  \par\medskip
  Set $Z = \{ 1,\ldots,n-1 \}$, $X = Z \cup \{ n \}$ and assume that the configuration
  $\HervSpace(Z)$ is already realized on a sphere $\naszasfera$, as required.
  In what follows we shall refer to the subsets of $Z$ 
  {\em as to points and circles (resp.) of }$\naszasfera$, 
  thus identifying a point/block   
  of the configuration $\HervSpace(Z)$ 
  and its image under a realization:
  a ``real'', ``geometrical'' point/circle.
  We are going to construct a realization of $\HervSpace(X)$ on $\naszasfera$
  extending the given realization of $\HervSpace(Z)$.
  \par
  Let us choose any circle through $\emptyset$ with the following properties:
  it lies on $\naszasfera$, is distinct from $a$,  and is not tangent to $a$
  for each circle $a\in\sub_1(Z)$; the circle found will be denoted by $\{ n \}$.
  The point of intersection of $\{n\}$ and $\{i\}$ distinct form $\emptyset$ 
  will be denoted by $\{i,n\}$ for each $i\in Z$. This way all the points of $\sub_2(X)$
  are already realized on $\naszasfera$. 
  \par
  One can say: proceeding this way we obtain the realization of each point and block
  of $\HervSpace(X)$. This is a bit hand-waving argument: let us make it more precise.
  Recall that each point $a$ and each block $a$ of $\HervSpace(X)$ are already realized
  whenever $n\notin a$.
  \begin{sentences}\itemsep-2pt\em
  \item\label{thm:reprezent:krok1}
    Assume that each point $a$ and each block $a$ of $\HervSpace(X)$ with $|a|\leq k$
    are properly realized on $\naszasfera$ (i.e. they are points and circles on 
    $\naszasfera$ that satisfy the incidences required in the definition of $\HervSpace(X)$).
    Let $n \in p \in \sub_{k+1}(X)$ be a point of $\HervSpace(X)$. Then all the circles
    $ p\setminus\{ i \} $ with $i\in p$ have a common point on $\naszasfera$.
    \begin{proof}[\proofname\ of \eqref{thm:reprezent:krok1}]
      The claim is evident for $k=2$, so we assume that $k\geq 3$.
      Let us write $a = p \setminus \{ n \}$. Take any distinct $i,j\in a$: we shall prove
      that $a$, $p\setminus\{i\}$, and $p\setminus\{j\}$ have a common point (distinct from
      $a\setminus\{ i,j \}$). From assumptions, there is $l\in a$ with $l\neq i,j$.
      Set $x = a \setminus \{i,j,l\}$ 
      (then $x$ is a point of $\HervSpace(X)$).
      Observe the schema presented in Figure \ref{diag:extend1}.
      \begin{figure}[ht]
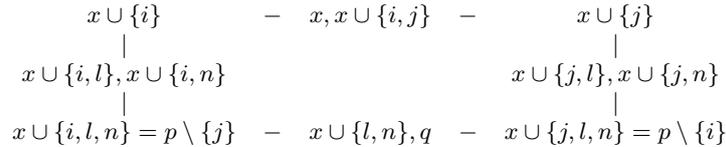

      \begin{center}\footnotesize\begin{math}
      \begin{array}{ccccc}
        x\cup\{i\}  & - & x, x\cup\{i,j\} & - & x\cup\{ j \} 
        \\
	| & \strut & \strut & \strut & |
	\\
	x\cup\{i,l\}, x \cup\{i,n\} & \strut & \strut & \strut & x\cup\{j,l\}, x\cup\{ j,n \}
	\\
	| & \strut & \strut & \strut & |
	\\
        x \cup \{ i,l,n \} = p\setminus\{ j \}  & - & x \cup\{ l,n \}, q 
        & - & x \cup\{ j,l,n \} =  p \setminus \{ i \}
      \end{array}\end{math}
      \end{center}
      \caption{Illustration to the proof of \eqref{thm:reprezent:krok1} in \ref{thm:reprezent}}
      \label{diag:extend1}
      \end{figure}
      Between the circles of the diagram there are placed their intersection points.
      From the inductive assumption of this paragraph, the points
      $x$, $x\cup\{ i,n \}$, $x\cup\{ j,n \}$, and $x\cup\{ l,n \}$ are on the circle
      $x\cup\{ n \}$
      From the Miquel Axiom we infer that $q$ lies on the circle which contains
      $x\cup\{ i,j \}$, $x\cup\{ i,l \}$, and $x\cup\{ j,l \}$; the latter circle is $a$,
      so $a$ passes through $q$, as required.
    \end{proof}
  \item\label{thm:reprezent:krok2}
    Assume that each point $a$ and each block $a$ of $\HervSpace(X)$ with $|a|\leq k$
    are properly realized on $\naszasfera$. 
    Let $n \in p \in \sub_{k+1}(X)$ be a block of $\HervSpace(X)$.
    Then all the points $ p\setminus\{ i \}$ with $i\in p$ lie on a circle on $\naszasfera$.
    \begin{proof}[\proofname\ of \eqref{thm:reprezent:krok2}]
      As above, without loss of generality it suffices to assume that $k\geq 4$.
      Let us write $a = p\setminus\{ n \}$. Let $i,j,m\in a$ be pairwise distinct;
      from assumption, there is $l\in a$ with $l\neq i,j,m$. We shall prove that
      the points $a$, $p\setminus\{i\}$, $p\setminus\{j\}$, and $p\setminus\{m\}$
      are on a circle on $\naszasfera$.
      Let $x = a \setminus\{i,j,m,l\}$ and consider the diagram in Figure \ref{diag:extend2}:
      \begin{figure}[ht]
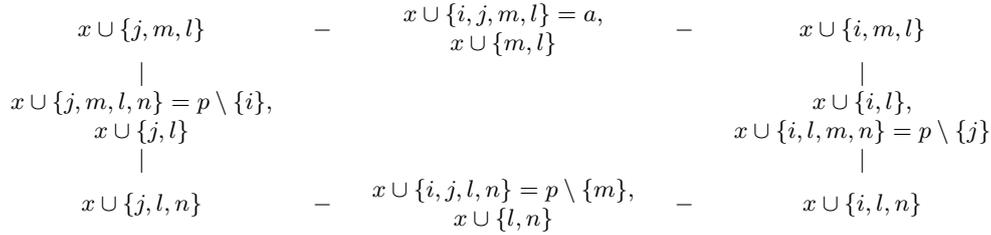

      \begin{center}\footnotesize\begin{math}
      \begin{array}{ccccc}
        x\cup\{j,m,l\}  & - & \begin{array}{c} x\cup\{i,j,m,l\} = a, \\ x\cup\{m,l\} \end{array}
        & - & x\cup\{ i,m,l \} 
	\\
	| & \strut & \strut & \strut & |
	\\
	\begin{array}{c}x\cup\{j,m,l,n\}=p\setminus\{ i \}, \\ x \cup\{j,l\} \end{array}
	& \strut & \strut & \strut & 
	\begin{array}{c} x\cup\{i,l\}, \\ x\cup\{ i,l,m,n \} = p \setminus\{ j \} \end{array}
	\\
	| & \strut & \strut & \strut & |
	\\
        x \cup \{ j,l,n \}   & - &
        \begin{array}{c} x \cup\{ i,j,l,n \} = p \setminus\{ m \}, \\ x\cup\{ l,n \} \end{array}
        & - & x \cup\{ i,l,n \}  
      \end{array}\end{math}
      \end{center}
      \caption{Illustration to the proof of \eqref{thm:reprezent:krok2} in \ref{thm:reprezent}}
      \label{diag:extend2}
      \end{figure}
      From inductive assumption, the points 
      $x\cup\{ m,l \}$,
      $x\cup\{ j,l \}$
      $x\cup\{ i,l \}$, and
      $x\cup\{ l,n \}$ are on the circle $x \cup\{ l \}$. From the Miquel Axiom we get
      the desired claim.
    \end{proof}
  \end{sentences}
  In view of the above we can inductively extend given realization from the elements of 
  $\sub_k(X)$ to the elements of $\sub_{k+1}(X)$ thus obtaining a required embedding.
\end{proof}

This, what is perhaps more interesting is an observation, which follows from 
the way in which our spherical representation was defined:
\begin{cor}
  Let $\goth M$ be a Miquelian Benz plane (i.e. let it be a sphere, a hyperboloid, or
  a cylinder in a Pappian projective 3-space, 
  with the family of conics on it distinguished) such that the 
  size of its chain is at least $n+1$.
  The structure $\HervSpace(n)$ can be realized on $\goth M$, with the blocks interpreted
  as chains of $\goth M$.
\end{cor}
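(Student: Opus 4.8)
The plan is to run the very same induction as in Theorem~\ref{thm:reprezent}, reading ``chain of $\goth M$'' wherever that proof says ``circle on $\naszasfera$''. The decisive observation is that the proof of Theorem~\ref{thm:reprezent} never used any metric feature of the Euclidean $2$-sphere: the two inductive steps \eqref{thm:reprezent:krok1} and \eqref{thm:reprezent:krok2} rest \emph{solely} on the Miquel Axiom applied to configurations already built, together with the elementary incidences of $\HervSpace(X)$. Since $\goth M$ is Miquelian by hypothesis, the Miquel Axiom is at our disposal in $\goth M$, so both steps transfer verbatim; equivalently, one may note that $\HervSpace(X)$ satisfies the Miquel Axiom abstractly (Proposition~\ref{prop:cox:miquelian}), which is exactly what those steps exploit.

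What does require a fresh argument is the \emph{base} of each round of the induction, namely the choice, when passing from $Z=\{1,\dots,n-1\}$ to $X = Z\cup\{n\}$, of a new chain $\{n\}$ through $\emptyset$ that is distinct from each existing base chain $\{i\}$ ($i\in Z$) and meets it transversally (not tangentially) in a second point $\{i,n\}$. On the $2$-sphere this was guaranteed by a genericity count; in $\goth M$ it is guaranteed by the hypothesis on the size of a chain. I would make this explicit by a counting remark: in $\HervSpace(n)$ every block $b$ carries exactly $n$ points (its $|b|$ predecessors $b\setminus\{i\}$ and its $n-|b|$ successors $b\cup\{j\}$), so any chain of $\goth M$ hosting a block must carry at least $n$ points. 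In particular the new chain $\{n\}$ has to accommodate $\emptyset$ together with the $n-1$ distinct intersection points $\{i,n\}$, i.e.\ $n$ points in all; the bound ``size of a chain $\geq n+1$'' provides precisely this room, with one point to spare, which is what lets the new chain genuinely cross each of the $n-1$ earlier chains through $\emptyset$ rather than be forced into tangency.

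With these two ingredients in place the induction proceeds exactly as before: assuming all points and blocks $a$ with $|a|\le k$ have been realized as points and chains of $\goth M$, steps \eqref{thm:reprezent:krok1} and \eqref{thm:reprezent:krok2} supply the common point, resp.\ the common chain, demanded by those elements of $\sub_{k+1}(X)$ that contain $n$, so the realization extends from level $k$ to level $k+1$; induction on $n$ (each stage legitimate by Fact~\ref{fct:wstep}) then realizes all of $\HervSpace(n)$. I expect the only genuine obstacle to be this base step: one must check that the room guaranteed by the chain-size bound really yields a chain transversal to \emph{all} the finitely many chains $\{i\}$ at once, and here the three Benz types (M\"obius, Laguerre, Minkowski) describe slightly differently how two chains sharing a point can meet, tangentially versus crossing. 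In each case, however, the combination of the Miquelian property with the chain-size hypothesis furnishes a non-tangent chain, so the construction goes through uniformly.
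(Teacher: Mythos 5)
Your overall route coincides with the paper's: the paper gives no separate proof of this corollary, remarking only that it ``follows from the way in which our spherical representation was defined'', and you correctly isolate the two ingredients that make the transfer from Theorem~\ref{thm:reprezent} work --- the inductive steps \eqref{thm:reprezent:krok1} and \eqref{thm:reprezent:krok2} invoke nothing beyond the Miquel Axiom, which holds in $\goth M$ by hypothesis, so the only fresh obligation is the initial choice of the chain $\{n\}$ through $\emptyset$.

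However, your treatment of that base step has a genuine gap. The count you offer --- the chain $\{n\}$ must carry $\emptyset$ together with the $n-1$ points $\{i,n\}$, and chain size $n+1$ leaves ``one point to spare'' --- verifies only a \emph{necessary} condition (any chain hosting a block of $\HervSpace(n)$ needs at least $n$ points); it does not \emph{produce} a chain through $\emptyset$ crossing each of the $n-1$ chains $\{i\}$ in a new second point. The argument that actually does this lives in the derived structure at $\emptyset$: in a Benz plane the residue at a point is an affine plane in which the chains through that point become lines and tangency at the point becomes parallelism, so the chains $\{1\},\dots,\{n-1\}$, being pairwise crossing, occupy $n-1$ distinct parallel classes, and one must exhibit an unused class (and, for an injective realization, a line in it avoiding the finitely many already-placed points $\{i,j\}$ --- a further count you omit entirely). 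Here the three Benz types differ in precisely the way you wave away at the end: for order $q$ (chain size $q+1$) the chains through a fixed point fall into $q+1$ parallel classes in the M\"obius case, $q$ in the Laguerre case, but only $q-1$ in the Minkowski case. Under the stated bound $q\geq n$, the M\"obius and Laguerre cases leave a free class, but a Minkowski plane of order $q=n$ has exactly $n-1$ usable classes, all consumed by the old chains; indeed $n$ pairwise-crossing chains through a common point force $n\leq q-1$ there. So your closing assertion that ``in each case'' the chain-size hypothesis furnishes the required non-tangent chain is not proved by anything you wrote, and at the borderline Minkowski case it fails outright --- yet this is the one step the corollary genuinely needs beyond Theorem~\ref{thm:reprezent}.
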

\begin{cor}
  For each positive integer $n$ the structure $\HervSpace(n)$ can be realized on a 
  hyperboloid and on a cylinder in the real projective 3-space. 
\end{cor}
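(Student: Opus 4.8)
The plan is to obtain this as a direct instance of the preceding Corollary on Miquelian Benz planes: for each $n$ it is enough to present, as the ambient structure $\goth M$, a Miquelian Benz plane realized on a hyperboloid and on a cylinder in the real projective $3$-space whose chains are long enough, and then quote that corollary. Thus the whole argument reduces to three routine verifications about the quadric Benz planes over the reals.

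First I would invoke the classical correspondence between quadrics and Benz planes. A ruled nondegenerate quadric --- a hyperboloid of one sheet --- in a projective $3$-space carries the structure of a Minkowski plane, while a quadric cone or cylinder carries that of a Laguerre plane; in both cases the chains are the nondegenerate plane sections, i.e. the conics lying on the surface. These are exactly two of the Benz planes admitted in the hypothesis of the preceding corollary. Next I would check that they are \emph{Miquelian}. Since the real field is commutative, the ambient projective $3$-space is Pappian, and the Benz planes associated with quadrics over a Pappian space satisfy the Miquel axiom --- as was recalled in Section \ref{sec:history} for the spherical (M\"obius) case and as holds, \emph{mutatis mutandis}, for the Minkowski and Laguerre cases. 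Hence both the hyperboloid and the cylinder yield a Miquelian Benz plane, as required.

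Finally, the chain-size hypothesis is immediate: a chain of either of these real Benz planes is a conic, whose point set has the cardinality of the real projective line and is in particular infinite, so it has at least $n+1$ points for every positive integer $n$. With all hypotheses of the preceding corollary in force, it furnishes a realization of $\HervSpace(n)$ on both surfaces for every $n\geq 3$; the degenerate cases $n=1,2$, where $\HervSpace(n)$ is a trivial flag structure, are realizable by inspection. I expect the one genuine point of substance to be the appeal to the classification ensuring that the real quadric Benz planes are Miquelian in the Minkowski and Laguerre cases (the M\"obius case being already used in the text); once this classical fact is granted, the construction in the proof of \ref{thm:reprezent} --- which uses only the Miquel axiom (\ref{prop:cox:miquelian}) together with the spanning of a plane by two chains meeting in two points (\ref{cor:cox:planar}) --- transfers to these surfaces without change, and everything else is bookkeeping.
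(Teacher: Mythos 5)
Your proposal is correct and takes essentially the same route as the paper: the authors state this corollary without separate proof precisely because it is the instance of the preceding corollary on Miquelian Benz planes (itself a by-product of the construction in Theorem \ref{thm:reprezent}) obtained by taking the real hyperboloid and the real cylinder, whose ambient projective $3$-space is Pappian and whose chains, being real conics, are infinite and hence of size at least $n+1$ for every $n$. Your explicit verifications of the Miquelian property for the Minkowski and Laguerre cases and of the chain-size hypothesis are exactly the routine checks the authors leave implicit.
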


Theorem \ref{thm:reprezent} for infinite countable $X$ was already remarked
in \cite{babbage}.
Note that the configuration $\HervSpace(n)$ realized on a real 2-sphere
(in more modern language: on the real M{\"o}bius plane)
is referred also as a {\em Clifford Configuration}
(cf. \cite{rigby1}, \cite{rigby2}, \cite{ziegen} and many others).
Babbage gives in \cite{babbage} for an infinite Clifford Configuration the labelling
of its points and circles by the finite sets of positive integers,
which directly shows that it is isomorphic to the Cox configuration,
no connection like this is indicated, though.

\medskip
  Let $X$ be an arbitrary set and $A\subset X$.
  Write 
  $\tau_A(Z) = A\mid Z = (A\setminus Z)\cup(Z\setminus A)$ 
  for any set $Z$.
Slightly generalizing \ref{fct:wstep} we see that
{\em if $X = X_1 \cup X_2$ is finite, $X_1\cap X_2 = \emptyset$,
and $A\in \sub(X_{3-i})$, $i\in\{1,2\}$ then the set
\begin{ctext}
  $\tau_A({\HervSpace(X_i)}) = 
  \left\{ \tau_A(a)\colon a\in\sub(X_i) \right\}$
\end{ctext}
yields a subconfiguration ${\HervSpSymb}^A$  of $\HervSpace(X)$
isomorphic to $\HervSpace(X_i)$.}  
Particularly, $\tau_\emptyset({\HervSpace(X_i)})$
is simply $\HervSpace(X_i)$ naturally embedded in $\HervSpace(X)$
(comp. \ref{fct:wstep}).
{\em The two families 
$(\HervSpSymb)^i  = \{ {\HervSpSymb}^A\colon A\in\sub(X_{3-i}) \}$
$i=1,2$
are mutually transversal and cover the underlying Cox configuration
$\HervSpace(X)$ in such a way that each its flag is a flag of one
of the configurations 
$\HervSpSymb^A$, $A \in\sub(X_1)\cup\sub(X_2)$.}
On one hand this observation allows us to decompose a Cox configuration
into smaller, simpler Cox configurations.
E.g. it enables us to imagine $\HervSpace(8)$ as a union of
$2^4+2^4 = 32$ M{\"o}bius configurations $\HervSpace(4)$,
$\HervSpace(6)$ as  a union of $2^3 + 2^3 = 16$ `spherical trianges' $\HervSpace(3)$,
or 
$\HervSpace(9)$ as a union of $2^5$ M{\"o}bius configurations and
$2^4$ configurations $\HervSpace(5)$.
On the other hand these decompositions correspond to 
decompositions of the (suitable) cube graphs (cf. \ref{fct:hypcub})
into smaller cube graphs
($Q_d$ decomposed into $Q_{d_1}, Q_{d_2},\ldots,Q_{d_t}$,
where $d = d_1+\ldots+d_t$).


\section{Automorphisms of Cox Configurations}\label{sec:cx:automorfy}

The following are immediate.
\begin{lem}\label{lem:aut1}
  Let $\varphi \in S_X$ and let $\widetilde{\varphi}$ be the natural extension of $\varphi$
  to a bijection of $\sub(X)$. Clearly, the map $\varphi\longmapsto\widetilde{\varphi}$
  is a group monomorphism $S_X\longrightarrow S_{\sub(X)}$.
 \par
  Then
  $\widetilde{\varphi}\in\Aut({\HervSpace(X)})$ for each $\varphi\in S_X$.
\end{lem}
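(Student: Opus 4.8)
The plan is to verify directly the two defining requirements for $\widetilde{\varphi}$ to lie in $\Aut(\HervSpace(X))$: that it carries the point set of $\HervSpace(X)$ onto the point set and the block set onto the block set, and that it preserves the incidence relation $\predces \cup \succes$ in both directions. The monomorphism assertion being already granted as ``clear'', the whole content is the membership $\widetilde{\varphi} \in \Aut(\HervSpace(X))$. First I would record the explicit description $\widetilde{\varphi}(a) = \varphi[a] := \{ \varphi(x)\colon x\in a \}$ for $a\in\sub(X)$, together with the identity $(\widetilde{\varphi})^{-1} = \widetilde{\varphi^{-1}}$. This shows at once that $\widetilde{\varphi}$ is a bijection of $\sub(X)$, and — crucially — that its inverse has exactly the same form, so any preservation property proved for the extension of an arbitrary permutation will apply verbatim to the inverse; this is what will make the bidirectionality in the incidence check cost nothing.

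Next I would note that, since $\varphi$ is a bijection, $|\widetilde{\varphi}(a)| = |a|$ for every $a$ (in particular finiteness is preserved, which matters for infinite $X$ under the convention of Section~\ref{sec:Cox:def}). Hence $\widetilde{\varphi}$ preserves the cardinality, and a fortiori its parity, of each subset: it maps $\sub_{2k}(X)$ onto $\sub_{2k}(X)$ and $\sub_{2k+1}(X)$ onto $\sub_{2k+1}(X)$, so the point set and block set of $\HervSpace(X)$ are each carried bijectively onto themselves. For incidence, I would use that $\widetilde{\varphi}$ preserves strict inclusion, $a\subset b \iff \varphi[a]\subset\varphi[b]$ (again by bijectivity of $\varphi$). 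Since $a\predces b$ means precisely $a\subset b$ with $|b| = |a|+1$, and $\widetilde{\varphi}$ retains both the strict inclusion and the cardinality gap, we get $a\predces b \Rightarrow \widetilde{\varphi}(a)\predces\widetilde{\varphi}(b)$; applying the same implication to $\varphi^{-1}$ (via $(\widetilde{\varphi})^{-1}=\widetilde{\varphi^{-1}}$) yields the converse. Thus $\widetilde{\varphi}$ preserves $\predces$, hence also $\succes = \predces^{-1}$ and the incidence relation $\predces\cup\succes$, in both directions, which is exactly the assertion that $\widetilde{\varphi}$ is an automorphism.

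There is no substantive obstacle — the lemma is flagged as immediate — and the single point deserving a moment's care is the bidirectionality: it is not enough to send incident pairs to incident pairs, one must also send non-incident pairs to non-incident pairs. Rather than argue this separately, I would simply invoke the remark from the first step that $(\widetilde{\varphi})^{-1} = \widetilde{\varphi^{-1}}$ is the extension of a permutation too, so the forward implication established for \emph{every} extension automatically supplies the backward one. This closes the proof.
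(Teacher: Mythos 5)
Your verification is correct and is exactly the routine argument the paper has in mind: the lemma is stated there without proof, flagged as ``immediate'', and your direct check (cardinality, hence parity, preservation; preservation of $\predces$; bidirectionality via $(\widetilde{\varphi})^{-1}=\widetilde{\varphi^{-1}}$) is the canonical way to fill it in. No gaps.
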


\begin{lem}\label{lem:aut2}
  Let $A\in\sub(X)$ be arbitrary finite. 
  Then
  $$
  \tau_A \text{ is }\left\{
  \begin{array}{ll} 
    \text{ an automorphism of } \HervSpace(X) & \text{when } 2 \mathrel{|} |A|,  
	\\
    \text{ a correlation of } \HervSpace(X) & \text{when } 2 \mathrel{\not{|}} |A|.	
  \end{array}\right.
  $$
\end{lem}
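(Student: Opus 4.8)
The plan is to exploit the fact that incidence in $\HervSpace(X)$ is governed entirely by the symmetric difference: two finite subsets $a,b$ of $X$ are incident (i.e.\ $a\predces b$ or $a\succes b$) precisely when they differ in a single element, that is, when $|a\mid b|=1$, since a point and a block always have cardinalities of opposite parity and adjacency in the subset lattice is exactly the size-one symmetric difference. First I would record the algebraic structure behind $\tau_A$. On the finite subsets $\sub_{<\omega}(X)$ the operation $\mid$ is associative and commutative with neutral element $\emptyset$, and every set is its own inverse; thus $\tau_A(Z)=A\mid Z$ is nothing but translation by $A$ in this Boolean group. In particular $\tau_A$ is an involution, hence a bijection of $\sub_{<\omega}(X)$ (here the finiteness of $A$ is used: it guarantees that $A\mid Z$ is finite whenever $Z$ is), and, being a translation, it preserves differences:
\[
  \tau_A(a)\mid\tau_A(b)=(A\mid a)\mid(A\mid b)=a\mid b \qquad\text{for all } a,b.
\]

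From this identity the incidence-preservation is immediate: $|\tau_A(a)\mid\tau_A(b)|=|a\mid b|$, so $a,b$ differ in exactly one element if and only if $\tau_A(a),\tau_A(b)$ do. Hence $\tau_A$ carries the incidence relation of $\HervSpace(X)$ onto itself in both directions.

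It remains to control how $\tau_A$ acts on the point/block partition, which is just the parity of cardinality. Since $|A\mid Z|=|A|+|Z|-2|A\cap Z|\equiv |A|+|Z|\pmod 2$, the map $\tau_A$ preserves the parity of cardinality when $|A|$ is even and reverses it when $|A|$ is odd. In the even case $\tau_A$ therefore sends points to points and blocks to blocks while preserving incidence, so it is an automorphism of $\HervSpace(X)$; in the odd case it interchanges the even subsets (the points) with the odd subsets (the blocks) while preserving incidence, which is exactly a correlation.

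I do not expect any genuine obstacle here: once the configuration's incidence is identified with ``symmetric difference of size one'' and $\tau_A$ is recognized as a translation of the Boolean group, everything collapses to the single identity $\tau_A(a)\mid\tau_A(b)=a\mid b$ together with the elementary parity count, which is why the statement is labelled immediate. The only point demanding a word of care is the role of the finiteness hypothesis on $A$, needed precisely so that $\tau_A$ remains within the finite subsets that constitute the point and block sets of $\HervSpace(X)$.
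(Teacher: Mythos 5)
Your proof is correct, and it is essentially the argument the paper has in mind: the paper states the lemma as immediate without proof, but its later description of the full collineation--correlation group as $S_X\ltimes \struct{ \sub_{<\omega}(X), \mid }$ shows it rests on exactly your viewpoint, namely that $\tau_A$ is a translation in the Boolean group $(\sub_{<\omega}(X),\mid)$, that incidence in $\HervSpace(X)$ is precisely $|a\mid b|=1$, and that the point/block partition is the parity of cardinality, which $\tau_A$ preserves or swaps according to the parity of $|A|$. Your remark on the finiteness of $A$ (keeping $\tau_A$ within $\sub_{<\omega}(X)$ and making the parity shift well defined) is also the right point of care, so nothing is missing.
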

As a consequence of \ref{lem:aut2} we get an evident
\begin{cor}\label{cor:cx:homog}
  The structure $\HervSpace(X)$ is homogeneous and self-dual for any set $X$.
  Moreover, its automorphism group is flag-transitive.
  \par
  Any two stabilizers
  $({\Aut({\HervSpace(X)})})_{A}$ with $A\in\sub(X)$ are isomorphic.
\end{cor}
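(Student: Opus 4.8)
The plan is to read everything off from the two families of explicit maps provided by \ref{lem:aut1} and \ref{lem:aut2}, using only three elementary properties of the symmetric difference $\tau_A(Z) = A\mid Z$: that $\tau_A$ is an involution with $\tau_A(A)=\emptyset$, that $\tau_A\circ\tau_B = \tau_{A\mid B}$, and that $|\tau_A(Z)|\equiv|Z|+|A|\pmod 2$, so that $\tau_A$ shifts the cardinality-parity of a set by $|A|$. Self-duality is then immediate: since $|X|\geq 3$ there is a singleton $\{x\}\in\sub(X)$, and by \ref{lem:aut2} the odd-cardinality map $\tau_{\{x\}}$ is a correlation, so $\HervSpace(X)$ admits a duality. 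For homogeneity, i.e.\ transitivity on points and on blocks, I take any two points $a,a'$ (even subsets); then $a\mid a'$ is even, so $\tau_{a\mid a'}$ is an automorphism by \ref{lem:aut2}, and it sends $a$ to $(a\mid a')\mid a = a'$. The identical computation with two blocks $b,b'$ (where $b\mid b'$ is again even) yields block-transitivity.

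For flag-transitivity I would reduce every flag to a single standard one. Fixing $x_0\in X$, I take $(\emptyset,\{x_0\})$ as the reference flag. Given an arbitrary flag consisting of a point $a$ and a block $b$, the two sets differ in exactly one element, so $a\mid b = \{t\}$ is a singleton; applying the automorphism $\tau_a$ (legitimate since $|a|$ is even) sends $a\mapsto\emptyset$ and $b\mapsto a\mid b = \{t\}$, carrying the flag to $(\emptyset,\{t\})$. This treats both inclusion directions $a\subset b$ and $b\subset a$ uniformly. A permutation $\varphi\in S_X$ with $\varphi(t)=x_0$ then gives, via \ref{lem:aut1}, an automorphism $\widetilde{\varphi}$ fixing $\emptyset$ and sending $\{t\}$ to $\{x_0\}$; the composite $\widetilde{\varphi}\circ\tau_a$ takes the given flag to $(\emptyset,\{x_0\})$, so the automorphism group is transitive on flags.

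The final claim, that all stabilizers $({\Aut({\HervSpace(X)})})_A$ are isomorphic, is where the self-duality must genuinely enter, and it is the one step to set up carefully. I would pass to the extended group $G^{+}$ of all collineations and correlations of $\HervSpace(X)$, which contains every $\tau_A$ by \ref{lem:aut2}. Since $\tau_A(A)=\emptyset$, the group $G^{+}$ acts transitively on the whole of $\sub(X)$ — across the point/block divide, which no subgroup of genuine automorphisms could achieve. The key observation is that a correlation reverses cardinality-parity and hence cannot fix any given $A$, so the stabilizer of $A$ in $G^{+}$ already lies in $\Aut$, i.e.\ $(G^{+})_A = ({\Aut({\HervSpace(X)})})_A$. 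Transitivity of $G^{+}$ makes all the stabilizers $(G^{+})_A$ conjugate in $G^{+}$, hence pairwise isomorphic (the conjugating element need not itself be an automorphism), and therefore the groups $({\Aut({\HervSpace(X)})})_A$ are isomorphic for every $A\in\sub(X)$, whatever the parity of $|A|$. The only real subtlety, and the main thing to get right, is precisely this: automorphisms alone relate point-stabilizers only to point-stabilizers and block-stabilizers only to block-stabilizers, and it is the correlations coming from \ref{lem:aut2} that bridge the two types.
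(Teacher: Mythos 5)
Your proof is correct and follows exactly the route the paper intends: the corollary is stated there as an evident consequence of the translations $\tau_A$ from \ref{lem:aut2} (together with the permutations $\widetilde{\varphi}$ of \ref{lem:aut1}), and you have simply filled in the details — even parity of $a\mid a'$ for transitivity, an odd $\tau_{\{x\}}$ for self-duality, and conjugation inside the full group of collineations and correlations for the stabilizers. Your closing observation, that correlations fix no element (they swap the parity classes) so the $G^{+}$-stabilizer of $A$ coincides with its $\Aut$-stabilizer, is precisely the point that makes the last claim work across the point/block divide.
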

\begin{lem}\label{lem:aut3}
  The action of $({\Aut({\HervSpace(X)})})_{\emptyset}$ on the set
  $\sub_{<\omega}(X)$   
  coincides with the action of the group
  $\big\{ \widetilde{\varphi}\colon \varphi\in S_X  \big\}$ on this set. 
\end{lem}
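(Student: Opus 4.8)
The plan is to establish the set equality $\big(\Aut(\HervSpace(X))\big)_{\emptyset} = \{\widetilde{\varphi}\colon\varphi\in S_X\}$ as subgroups of the symmetric group on $\sub_{<\omega}(X)$; since every automorphism of $\HervSpace(X)$ is nothing but a structure-preserving permutation of its point-and-block set $\sub_{<\omega}(X)$, this equality is precisely the coincidence of actions that is asserted. One inclusion is free from Lemma \ref{lem:aut1}: each $\widetilde{\varphi}$ is an automorphism of $\HervSpace(X)$ and plainly $\widetilde{\varphi}(\emptyset)=\emptyset$, so $\{\widetilde{\varphi}\colon\varphi\in S_X\}\subseteq\big(\Aut(\HervSpace(X))\big)_{\emptyset}$. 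All the work lies in the reverse inclusion.

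So fix $f\in\big(\Aut(\HervSpace(X))\big)_{\emptyset}$. First I would read off the permutation of $X$ that $f$ induces. By Fact \ref{fct:hypcub} and the remarks following it, the Levi graph of $\HervSpace(X)$ is the connected hypercube on $\sub_{<\omega}(X)$ in which two sets are adjacent exactly when they differ by a single element; hence the neighbours of the vertex $\emptyset$ are precisely the singletons $\{i\}$, $i\in X$. As $f$ is a configuration automorphism fixing $\emptyset$, it permutes these neighbours, so there is $\varphi\in S_X$ with $f(\{i\})=\{\varphi(i)\}$ for every $i\in X$. Passing to $g:=\widetilde{\varphi}^{-1}\circ f$, which again stabilises $\emptyset$, it now suffices to prove that an automorphism $g$ fixing $\emptyset$ and every singleton is the identity on $\sub_{<\omega}(X)$; this yields $f=\widetilde{\varphi}$, as required.

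For the latter I would argue by induction on cardinality, the crucial preliminary being that $g$ preserves cardinality. Indeed $g$ is induced by an automorphism of the connected hypercube that fixes $\emptyset$, so it preserves graph distance from $\emptyset$; and the hypercube distance from $\emptyset$ to a finite set $a$ is just $|a|$, whence $|g(a)|=|a|$ for all $a$. The cases $|a|\le 1$ hold by hypothesis. Assuming $g$ fixes every subset of size at most $k$ (with $k\ge 1$), take $|a|=k+1$. For each $i\in a$ the set $a\setminus\{i\}$ has size $k$, is incident to $a$ in the configuration, and is fixed by $g$; since $g(a)$ has size $k+1$ and is incident to each $a\setminus\{i\}$, incidence forces $a\setminus\{i\}\subset g(a)$. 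Therefore $g(a)\supseteq\bigcup_{i\in a}(a\setminus\{i\})=a$ (using $k+1\ge 2$), and equality of cardinalities gives $g(a)=a$. This closes the induction, so $g=\mathrm{id}$ and the reverse inclusion follows.

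The main obstacle is precisely this inductive reconstruction: one must know both that a finite set of size at least $2$ is recovered as the union of its codimension-one subsets, and, more delicately, that $g$ cannot alter cardinalities. The second point is exactly where the hypothesis of stabilising $\emptyset$ is indispensable — a general automorphism such as $\tau_A$ of Lemma \ref{lem:aut2} does change cardinalities — and it is supplied cleanly by the hypercube description of the Levi graph in Fact \ref{fct:hypcub}.
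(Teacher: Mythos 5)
Your proof is correct and takes essentially the same route as the paper's: read off $\varphi\in S_X$ from the action of $f$ on the blocks (singletons) through the fixed point $\emptyset$, then reconstruct the images of all finite sets layer by layer through $\sub_k(X)$ by induction on incidences. Your normalization $g=\widetilde{\varphi}^{-1}\circ f$ and the Levi-graph distance argument for cardinality preservation (via Fact \ref{fct:hypcub}) just make explicit what the paper compresses into ``continuing in this fashion''.
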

\begin{proof}
  Let $f\in({\Aut({\HervSpace(X)})})_{\emptyset}$, then $f$ yields a permutation of the blocks
  through $\emptyset$; right from the definition this means that there is $\varphi\in S_X$
  such that
  $f(\{ i \}) = \{ \varphi(i) \} = \widetilde{\varphi}(\{ i \})$ for each $i \in X$.
  Next, $f$ maps the points of intersection of the above  blocks onto itself,
  particularly, $f(\{ i,j \}) = \{ \varphi(i),\varphi(j) \} = \widetilde{\varphi}(\{ i,j \})$
  for each pair $i,j$ of distinct elements of $X$. Continuing in this fashion
  we end up with $f(a) = \widetilde{\varphi}(a)$ for each $a\in\sub_k(X)$, 
  for every positive integer $k$.
\end{proof}

Let $\varphi\in S_X$, $A\subset X$.
Clearly,
$$ \widetilde{\varphi}\circ \tau_A \circ {\widetilde{\varphi}}^{-1} = 
  \tau_{\widetilde{\varphi}(A)}, $$

Summing up \ref{lem:aut1}, \ref{lem:aut2}, and \ref{lem:aut3} we conclude with
\begin{cor}
  The group of collineations and correlations of 
  $\HervSpace(X)$ is isomorphic to 
  the semidirect  product 
  $S_X\ltimes \struct{ \sub_{<\omega}(X), \mid }$.
  In the particular case $|X| = n <\omega$, the group of collineations and correlations
  of $\HervSpace(n)$ is thus isomorphic to the semidirect product
  $S_n \ltimes C_2^n$.
\end{cor}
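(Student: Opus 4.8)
The plan is to realise $G$, the full group of collineations and correlations of $\HervSpace(X)$, as an \emph{internal} semidirect product of the two families of maps already produced in Lemmas \ref{lem:aut1}--\ref{lem:aut3}. Put $H := \{\widetilde{\varphi}\colon \varphi\in S_X\}$ and $N := \{\tau_A\colon A\in\sub_{<\omega}(X)\}$. By Lemma \ref{lem:aut1}, $H\leq G$ and $\varphi\mapsto\widetilde{\varphi}$ gives $H\cong S_X$; by Lemma \ref{lem:aut2} every $\tau_A$ is a collineation or a correlation, so $N\leq G$, and since $\tau_A\circ\tau_B = \tau_{A\mid B}$ the assignment $A\mapsto\tau_A$ is an isomorphism $\struct{\sub_{<\omega}(X),\mid}\cong N$. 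Here I use that each point and each block of $\HervSpace(X)$ is a \emph{finite} subset of $X$, so that all $\tau$-indices stay in $\sub_{<\omega}(X)$.

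First I would check the three conditions for an internal semidirect product $G = H\ltimes N$. That $H\cap N = \{\mathrm{id}\}$ is immediate: evaluating a hypothetical equality $\widetilde{\varphi}=\tau_A$ at $\emptyset$ yields $\emptyset = \widetilde{\varphi}(\emptyset) = \tau_A(\emptyset) = A$, forcing $A=\emptyset$ and then $\varphi = \mathrm{id}$. Normality of $N$ is handed to us by the displayed identity $\widetilde{\varphi}\circ\tau_A\circ\widetilde{\varphi}^{-1} = \tau_{\widetilde{\varphi}(A)}$ (conjugation by $H$ stays inside $N$) together with commutativity of $N$; once surjectivity is established this gives $N\trianglelefteq G = \langle H,N\rangle$.

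The substantive step, and the one I expect to require genuine care, is surjectivity $G = N\cdot H$. Given $g\in G$, look at $g(\emptyset)$: as $\emptyset$ is a point, $g(\emptyset)$ is a finite subset $A$, with $|A|$ even when $g$ is a collineation and odd when $g$ is a correlation (self-duality, Cor. \ref{cor:cx:homog}, guarantees the correlation case is nonempty and symmetric to the collineation one). By Lemma \ref{lem:aut2} the map $\tau_A$ has exactly the same type as $g$, so $g' := \tau_A\circ g$ is a collineation; moreover $g'(\emptyset) = \tau_A(A) = \emptyset$, i.e. $g'\in({\Aut({\HervSpace(X)})})_{\emptyset}$. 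Lemma \ref{lem:aut3} now identifies $g'$ with some $\widetilde{\varphi}\in H$, and since $\tau_A$ is an involution we recover $g = \tau_A\circ\widetilde{\varphi}\in N\cdot H$. Computing $\tau_A\widetilde{\varphi}\cdot\tau_B\widetilde{\psi} = \tau_{A\mid\widetilde{\varphi}(B)}\,\widetilde{\varphi\psi}$ (once more via the conjugation identity) shows the factorisation multiplies exactly as the semidirect product with $S_X$ acting by $\varphi\cdot B = \widetilde{\varphi}(B)$; hence $G\cong S_X\ltimes\struct{\sub_{<\omega}(X),\mid}$.

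Finally I would specialise to $|X| = n<\omega$. Then $\sub_{<\omega}(X)=\sub(X)$, and the characteristic-vector bijection turns $\struct{\sub(X),\mid}$ into $C_2^n$ with $S_n$ permuting the coordinates, so $G\cong S_n\ltimes C_2^n$. The whole argument is routine bookkeeping about an internal semidirect product except for the surjectivity paragraph; but there the essential content has already been extracted into Lemma \ref{lem:aut3}, so I anticipate no real obstacle beyond keeping the collineation/correlation parity consistent throughout.
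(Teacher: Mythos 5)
Your proposal is correct and follows exactly the route the paper intends: the corollary is stated there as a direct ``summing up'' of Lemmas \ref{lem:aut1}, \ref{lem:aut2}, \ref{lem:aut3} together with the displayed conjugation identity $\widetilde{\varphi}\circ\tau_A\circ\widetilde{\varphi}^{-1}=\tau_{\widetilde{\varphi}(A)}$, and your write-up simply makes that assembly explicit (normality and triviality of the intersection from the identity, surjectivity by normalizing $g(\emptyset)$ with the appropriate $\tau_A$ and invoking Lemma \ref{lem:aut3}). The parity bookkeeping for collineations versus correlations and the specialization $\struct{\sub(X),\mid}\cong C_2^n$ are handled exactly as the paper's lemmas anticipate, so there is nothing to add.
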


Evidently, the set
$\{A\subset X \colon 2 \mathrel{|} |A| < \infty  \}$
is a subgroup of the 2-group $\struct{\sub(X),\mid}$.
This yields immediately (cf. also \cite{coxet})
\begin{cor}\label{cor:Cox:aut} 
  The group of collineations of 
  $\HervSpace(X)$ is isomorphic to 
  the semidirect  product 
  $S_X\ltimes \struct{ \{ a\in\sub_{<\omega}(X)\colon 2 \mathrel{|} |a| \}, \mid }$.
  \par
  In particular, 
  $\Aut({\HervSpace(n)}) \cong S_n \ltimes C_2^{n-1}$
\end{cor}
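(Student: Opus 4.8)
The plan is to descend from the previous corollary, which identifies the full group of collineations and correlations of $\HervSpace(X)$ with the semidirect product $S_X\ltimes\struct{\sub_{<\omega}(X),\mid}$, and to carve out the collineations as an explicit index-$2$ subgroup. Under that isomorphism every collineation or correlation is a composite $\widetilde{\varphi}\circ\tau_A$ with $\varphi\in S_X$ and $A\in\sub_{<\omega}(X)$, so it suffices to decide, for each such composite, whether it is a collineation.

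First I would record the parity dichotomy that governs the type. By Lemma \ref{lem:aut1} every $\widetilde{\varphi}$ is a collineation, hence it preserves the partition of $\sub_{<\omega}(X)$ into points (the even subsets) and blocks (the odd subsets). Therefore the composite $\widetilde{\varphi}\circ\tau_A$ preserves this partition exactly when $\tau_A$ does, and by Lemma \ref{lem:aut2} this happens precisely when $|A|$ is even. Consequently the collineations correspond exactly to the pairs $(\varphi,A)$ with $2\mid|A|$.

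Next I would check that these pairs close up into the asserted subgroup. The even finite subsets are closed under symmetric difference since $|A\mid B|\equiv|A|+|B|\pmod 2$, so $\struct{\{a\in\sub_{<\omega}(X)\colon 2\mid|a|\},\mid}$ is a subgroup of $\struct{\sub_{<\omega}(X),\mid}$ (this is the observation recorded just before the statement). Because each $\widetilde{\varphi}$ preserves cardinality, the conjugation formula $\widetilde{\varphi}\circ\tau_A\circ\widetilde{\varphi}^{-1}=\tau_{\widetilde{\varphi}(A)}$ leaves this subgroup invariant; thus the semidirect product restricts and the collineation group is the internal semidirect product $S_X\ltimes\struct{\{a\colon 2\mid|a|\},\mid}$, as claimed.

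Finally, for the finite specialization with $|X|=n$, the group $\struct{\sub(X),\mid}$ is the elementary abelian $2$-group $C_2^n$ (via characteristic vectors), and the even-cardinality subsets form the kernel of the total-parity map $C_2^n\to C_2$, an index-$2$ subgroup isomorphic to $C_2^{n-1}$; this yields $\Aut(\HervSpace(n))\cong S_n\ltimes C_2^{n-1}$. The whole argument is essentially bookkeeping on top of Lemmas \ref{lem:aut1}--\ref{lem:aut3} and the previous corollary; the only step that demands genuine care is the parity dichotomy, where one must be certain that $\widetilde{\varphi}$ contributes nothing to the collineation-versus-correlation type, so that the type can be read off from $|A|$ alone.
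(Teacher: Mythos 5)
Your proposal is correct and takes essentially the same route as the paper: the paper likewise deduces the corollary immediately from the preceding corollary on the full group of collineations and correlations, via the parity dichotomy of Lemma \ref{lem:aut2} and the remark that the even-cardinality finite subsets form a subgroup of the 2-group $\struct{\sub(X),\mid}$. Your write-up merely spells out the conjugation-invariance and the index-two bookkeeping that the paper treats as ``evident''.
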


\begin{rem}
  Let $|X| = 2k$ and $k > 2$. Then 
  $\Aut({{\bf K}^\dagger(X,k)})\cong C_2 \oplus S_X$.
\normalfont
  This can be directly deduced from 
  known characterizations of the automorphisms of combinatorial Grassmannians,
  and of the maps which preserve binary collinearity of points in such Grassmannians.
  In view of \ref{lem:aut1} and \ref{lem:aut2} this means also that
  each automorphism of 
    ${{\bf K}^\dagger(X,k)}$ 
  can be extended to an automorphism of $\HervSpace(X)$:
  an automorphism of ${{\bf K}^\dagger(X,k)}$ is (a restriction of) a map of the form
    $(\tau_X)^\varepsilon\circ\widetilde{\varphi}$, 
    where $\varphi\in S_X$, and $\varepsilon=0,1$.
\end{rem}

A characterization of the automorphisms of geometrical Cox and Clifford
configurations is not so evident: an automorphism of a configuration 
$\goth D$ realized in another geometrical structure $\goth M$ is commonly 
required to be a ``geometrical" automorphims i.e. 
it is required to be an {\em automorphism of} $\goth M$ which leaves $\goth D$ invariant.
Classification of such automorphisms of $\goth D$
may depend on the way in which $\goth D$ is embedded into $\goth M$
(comp. e.g. \cite{coxdes} in the case of automorphims of the Desargues Configuration
on the projective plane).
In any case, a geometrical automorphism of $\goth D$ is an (ordinary)
automorphism of $\goth D$ as well. Let us have a look at the following example.
\begin{exm}
  Let ${\goth D} = \HervSpace(n)$ be realized on a M{\"o}bius plane $\goth M$
  and $\varphi\in S_n$.
  A geometrical automorphism $\widetilde{\varphi}$ (if it exists) permutes circles
  through a fixed point.
  In particular, for every circle $\{ i \}$ the harmonic ratios
  $(\emptyset,\{ i,j_1 \}//\{ i,j_2 \},\{ i,j_3 \})$
  and
  $(\emptyset,\{ \varphi(i),\varphi(j_1) \}//
   \{ \varphi(i),\varphi(j_2) \},\{ \varphi(i),\varphi(j_3) \})$
  should coincide for each three-element set
  $\{ j_1,j_2,j_3\} \subset \{ 1,...,n \} \setminus \{ i \}$ .
  Thus, e.g., the permutation 
  $(1)(2)(3,4)(5)\in\Aut({\HervSpace(5)})$
  does not extend to any geometrical automorphism.
  Slightly more generally:
  {\em If $\id\neq\varphi\in S_X$, $|\Fix(\varphi)|\geq 3$ then $\widetilde{\varphi}$
  is never a geometrical automorphism of $\goth D$}.
  \par
  A systematic study of the admissible sizes of the groups of geometrical 
  automorphisms of the Cox (Clifford) configurations is addressed to other papers.
\myend
\end{exm}


\bigskip\bigskip
\noindent
Authors' address:
\\
Ma{\l}gorzata Pra{\.z}mowska, Krzysztof Pra{\.z}mowski
\\
Institute of Mathematics
\\
University of Bia{\l}ystok
\\
\strut\quad ul. Akademicka 2
\\
\strut\quad 15-267 Bia{\l}ystok, Poland
\\
e-mail: {\ttfamily malgpraz@math.uwb.edu.pl},
{\ttfamily krzypraz@math.uwb.edu.pl}

\end{document}